\newcommand*{\rom}[1]{\expandafter\@slowromancap\romannumeral #1@}
\newcommand{\BN}{{\mathbb {N}}}
\newcommand{\RC}{{\mathrm {C}}}
\newcommand{\RH}{{\mathrm {H}}}
\newcommand{\RT}{{\mathrm {T}}}
\newcommand{\RU}{{\mathrm {U}}}
\newcommand{\End}{{\mathrm{End}}}
\newcommand{\Hom}{{\mathrm{Hom}}}
\renewcommand{\Im}{{\mathrm{Im}}}
\newcommand{\Ker}{{\mathrm{Ker}}}
\newcommand{\Lie}{{\mathrm{Lie}}}
\newcommand{\SL}{{\mathrm{SL}}}
\newcommand{\wh}{\widehat}
\newcommand{\ov}{\overline}
\newcommand{\con}{\textit{C}}
\newcommand{\dif}{\operatorname{d}}
\newcommand{\Tr}{\operatorname{Tr}}
\newcommand{\sgn}{\operatorname{sgn}}
\newcommand{\g}{\mathfrak g}
\renewcommand{\k}{\mathfrak k}
\newcommand{\h}{\mathfrak h}
\newcommand{\q}{\mathfrak q}
\renewcommand{\b}{\mathfrak b}
\renewcommand{\u}{\mathfrak u}
\renewcommand{\l}{\mathfrak l}
\newcommand{\s}{\mathfrak s}
\renewcommand{\o}{\mathfrak o}
\newcommand{\z}{\mathfrak z}
\newcommand{\C}{\mathbb{C}}
\newcommand{\abs}[1]{\lvert#1\rvert}
\newcommand{\be}{\begin {equation}}
\newcommand{\ee}{\end {equation}}
\newcommand{\bee}{\begin {equation*}}
\newcommand{\eee}{\end {equation*}}
\renewcommand{\mid}{\,:\,}
\theoremstyle{plain}
\theoremstyle{plain}
\theoremstyle{plain}
\newtheorem{lem}{Lemma}[section]
\newtheorem{thml}[lem]{Theorem}
\newtheorem{leml}[lem]{Lemma}
\newtheorem{rmkl}[lem]{Remark}
\theoremstyle{plain}
\theoremstyle{plain}
\theoremstyle{remark}
\theoremstyle{remark}
\theoremstyle{definition}
\numberwithin{equation}{section}
\title{Hausdorffness of certain nilpotent cohomology spaces}
\author{Fabian Januszewski}
\address{Institut f\"{u}r Mathematik, Fakult\"{a}t EIM, Paderborn University, Warburger Str.\ 100, 33098 Paderborn, Germany}
\email{fabian.januszewski@math.uni-paderborn.de}
\author{Binyong Sun}
\address{Institute for Advanced Study in Mathematics and New Cornerstone Science Laboratory, Zhejiang University,  Hangzhou, 310058, China}
\email{sunbinyong@zju.edu.cn}
\author{Hao Ying}
\address{School of Mathematical Sciences, Zhejiang University,  Hangzhou, 310058, China}
\email{yhmath@zju.edu.cn}
\subjclass[2020]{22E46, 17B56}
\keywords{smooth representation, nilpotent cohomology, cubic Dirac operator}
\begin{document}
		
	\maketitle
		
	\begin{abstract}
		Let $(\pi,V)$ be a smooth representation of a compact Lie group $G$ on a quasi-complete locally convex complex  topological vector space.  We show that the Lie algebra cohomology space $\RH^\bullet(\u, V)$ and the Lie algebra homology space $\RH_\bullet(\u, V)$ are both Hausdorff, where $\u$ is the nilpotent radical of a parabolic subalgebra of the complexified Lie algebra $\g$ of $G$.
	\end{abstract}

\section{Introduction}
Let $G$ be a Lie group. 
In our context, a representation of $G$ is a quasi-complete, locally convex, Hausdorff complex topological vector space $V$, together with a continuous linear action
\be\label{action}
G\times V\rightarrow V. 
\ee
We say that a representation $V$ 
of $G$ is smooth if the action map \eqref{action} is smooth as a map between (possibly infinite-dimensional) smooth manifolds (\cite{GN07}). To emphasize the action, we will denote a representation by the pair $(\pi, V)$, where $\pi$ refers to the action.

Let $\g$ be a finite-dimensional complex Lie algebra. A continuous $\g$-module is a quasi-complete, locally convex, Hausdorff complex topological vector space $V$, together with a continuous Lie algebra action
\be
\g\times V\rightarrow V.
\ee

For a continuous $\g$-module $V$, recall that the $\g$-cohomology is computed by the total complex
\[
\Hom(\wedge^\bullet\g,V)=\wedge^\bullet (\g ^*) \otimes V,
\]
where $\,^*$ indicates the dual space and $\wedge^\bullet$ indicates the exterior algebra. The complex $\Hom(\wedge^\bullet \g,V)$ carries a natural topology and the coboundary map $\dif\,$ is continuous for this topology. The cohomology space $\RH^\bullet(\g, V)$ inherits the natural subquotient topology.

Dually, the total complex for $\g$-homology is
\[
\wedge^\bullet\g\otimes V
\]
with its natural topology and the continuous boundary map $\partial$. The homology space $\RH_\bullet(\g,V)$ again inherits the natural subquotient topology.
Both $\RH^\bullet(\g,V)$ and $\RH_\bullet(\g,V)$ are not necessarily Hausdorff, since the images of $\dif\,$ and $\partial$ are not necessarily closed.

From this point onwards, we assume that $G$ is a real reductive group and $\g$ is its complexified Lie algebra. Fix a Cartan involution $\theta$ of $G$. Denote by $K:=G^\theta$ the fixed point subgroup of $\theta$, which is a maximal compact subgroup of $G$. Let $\q=\l\oplus\u$ be a $\theta$-stable parabolic subalgebra of $\g$ with nilpotent radical $\u$ and Levi factor $\l:=\q\cap\bar\q$. 
Here and henceforth, $``\,\ov{\phantom a}\,"$ over a Lie subalgebra of $\g$ denotes its complex conjugation with respect to the real form $\Lie(G)\subseteq \g$, where $\Lie(G)$ is the Lie algebra of $G$. Let $L:=N_G(\q)=N_G(\ov\q)$ (the normalizers) be the Levi subgroup. Note that $\l$ is the complexified Lie algebra of $L$.

Given a $(\g,K)$-module $M$ of $G$, a globalization of $M$ is defined to be a representation $V$ of $G$ together with a $(\g,K)$-module isomorphism between $M$ and the underlying $(\g,K)$-module of $V$.

When $M$ has finite length (in this case $M$ is called a Harish-Chandra module), it has  four  canonical globalizations: the minimal globalization $M^{\min}$, the Casselman-Wallach globalization $M^\infty$, the distribution globalization $M^{-\infty}$, and the maximal globalization $M^{\max}$. These globalizations are smooth representations of $G$ on Fr\'echet or dual  Fr\'echet spaces, and they fit into a sequence of inclusions
\[
M\subset M^{\min}\subset M^\infty\subset M^{-\infty}\subset M^{\max}.
\]
For $\alpha\in\{\min,\infty,-\infty,\max\}$, Vogan conjectured that the $\u$-cohomology $\RH^\bullet(\u,M^\alpha)$ is Hausdorff, which implies that $\RH^\bullet(\u,M^\alpha)$ is a smooth representation of $L$. For more details about Vogan's conjecture, we refer to \cite[Conjecture 10.3]{Vo}.

Obviously, Vogan's conjecture is trivially true if $G$ is compact, since all Harish-Chandra modules are finite-dimensional in this case. Bratten and Corti proved in \cite{Bratten1998,Br06} that Vogan's conjecture holds when $M^\alpha$ is the minimal globalization or the maximal globalization. For related research on Hausdorffness of Lie algebra cohomologies, see \cite{Bratten1998,HT98,LLY21,Wong92,Wong99}.

Given an arbitrary smooth representation $V$ of a real reductive group $G$,
which is naturally a continuous $\g$-module,
we may ask more generally whether the cohomology space $\RH^\bullet(\u, V)$ is Hausdorff. When $G$ is compact, we establish a stronger result that ensures the Hausdorffness of $\RH^\bullet(\u, V)$, as stated in the following theorem.

\begin{thml}\label{main}
	Suppose that $G$ is compact. Let $V$ be a smooth representation of $G$. 

	\noindent 
	(a) Denote by
	\[
	\dif:  \wedge^\bullet (\u ^*)\otimes V\rightarrow \wedge^\bullet (\u ^* )\otimes V
	\]
	and
	\[
	\partial:\wedge^\bullet\u \otimes V\rightarrow \wedge^\bullet\u \otimes V
	\]
	the coboundary and boundary maps respectively, both of which are $L$-equivariant with respect to the natural actions of $L$. Then both of the inclusion maps
	\begin{equation}
		\Im \dif \hookrightarrow\Ker \dif
		\label{eq:inclusion1}
	\end{equation}
	and
	\begin{equation}
		\Im\,\partial \hookrightarrow\Ker\, \partial
		\label{eq:inclusion2}
	\end{equation}
	admit a degree-preserving $L$-equivariant  continuous linear splitting. 
	
	\noindent
	(b) The Lie algebra cohomology space $\RH^\bullet(\u,V)$ and the Lie algebra homology space $\RH_\bullet(\u,V)$ are Hausdorff and quasi-complete. Moreover, with the natural actions of $L$, $\RH_\bullet(\u,V)$ and $\RH^\bullet(\ov\u,V)$ are   smooth representations of $L$ that are isomorphic to each other.
\end{thml}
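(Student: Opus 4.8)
\emph{Proof outline.}\ The plan is to deduce part~(b) from part~(a) by routine arguments and to prove part~(a) by a Hodge‑theoretic analysis built on Kostant's cubic Dirac operator. For the deduction: an $L$‑equivariant continuous splitting of \eqref{eq:inclusion1} exhibits $\Im\dif$ as the image of a continuous $L$‑equivariant projection of the closed $L$‑subrepresentation $\Ker\dif$ of the smooth $L$‑representation $\wedge^\bullet(\u^*)\otimes V$, hence as a closed $L$‑stable subspace; therefore $\RH^\bullet(\u,V)=\Ker\dif/\Im\dif$ is Hausdorff, and it is isomorphic to a closed $L$‑subrepresentation of $\wedge^\bullet(\u^*)\otimes V$, hence quasi‑complete and a smooth $L$‑representation. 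The same applies to $\partial$ and, replacing $\u$ by its opposite $\ov\u$ (the nilpotent radical of the parabolic $\ov\q$, which is automatically $\theta$‑stable since $G$ is compact), to $\ov\u$. For the comparison $\RH_\bullet(\u,V)\cong\RH^\bullet(\ov\u,V)$: a nondegenerate invariant form $B$ on $\g$ identifies $\wedge^\bullet\u$ with $\wedge^\bullet(\ov\u{}^*)$ and carries $\partial$ to the transpose codifferential of the $\ov\u$‑cohomology coboundary, and the Hodge decomposition furnished by part~(a) then identifies both $\RH_\bullet(\u,V)$ and $\RH^\bullet(\ov\u,V)$ with the harmonic subspace by $L$‑equivariant isomorphisms.

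For part~(a), fix $B$ as above, chosen moreover to be rational with respect to the weight lattices of $G$ and $L$; this is possible because $G$ is compact, and it guarantees that the numbers $\|\lambda+\rho\|^2$ (over highest weights $\lambda$ of $G$) and $\|\mu+\rho_\l\|^2$ (over highest weights $\mu$ of $L$) all lie in $\tfrac1N\Z$ for a fixed integer $N$. With $B$ one has $\g=\l\oplus\s$ orthogonally, where $\s=\u\oplus\ov\u$ with $\u,\ov\u$ maximal isotropic and in duality, so $\wedge^\bullet\ov\u$ is a model for the spin module $S$ of $C(\s)$ and $B$ identifies $\wedge^\bullet(\u^*)\otimes V$ with $V\otimes S$. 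After this identification (and a harmless twist by a one‑dimensional representation of $L$) the coboundary $\dif$ becomes the degree‑raising half $D^+$ of Kostant's cubic Dirac operator $D=D^++D^-$ for $\l\subseteq\g$ acting on $V\otimes S$, and its transpose codifferential becomes $D^-$; both are continuous and $L$‑equivariant, $(D^{\pm})^2=0$, and Kostant's identity reads
\[
D^2=\mathrm{Cas}_{\l_\Delta}-\mathrm{Cas}^V_{\g}+\bigl(\|\rho_\l\|^2-\|\rho_\g\|^2\bigr)\,\mathrm{id}
\]
on $V\otimes S$, where $\mathrm{Cas}^V_{\g}$ is the Casimir of $\g$ acting through $V$ and $\mathrm{Cas}_{\l_\Delta}$ the Casimir of $\l$ acting diagonally. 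The parallel discussion for the homology complex $\wedge^\bullet\u\otimes V$ handles $\partial$.

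Here compactness of $G$ enters decisively. Since $G$ is compact, $\mathrm{Cas}^V_{\g}$ and $\mathrm{Cas}_{\l_\Delta}$ are commuting operators on $V\otimes S$ with continuous isotypic spectral projections, and $V\otimes S$ is the closed span of their joint eigenspaces, on which $D^2$ acts by the scalar $\|\mu+\rho_\l\|^2-\|\lambda+\rho_\g\|^2$; by the rationality of $B$ this scalar lies in $\tfrac1N\Z$, so $D^2$ has a \emph{spectral gap at $0$}. Consequently the spectral projection $P_0$ onto $\Ker D^2$ and the partial inverse of $D^2$ (its inverse on the remaining eigenspaces and $0$ on $\Ker D^2$) are continuous and $L$‑equivariant, $\Im D^2$ is closed, and $V\otimes S=\Ker D^2\oplus\Im D^2$ topologically. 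The crux is then the inclusion $\Ker D^2\subseteq\Ker D$; granting it, $\Im D=\Im D^2$ is closed, and since $D^2$ preserves the exterior grading one obtains in each degree $p$ a topological decomposition $\Ker D^+=\Im D^+\oplus(\Ker D^+\cap\Ker D^-)$, the projections being restrictions of $P_0$ and $\mathrm{id}-P_0$; translated back this is precisely the desired splitting of \eqref{eq:inclusion1}, and the same analysis carried out for the homology complex (equivalently, via $B$, for the $\ov\u$‑cohomology complex) yields the splitting of \eqref{eq:inclusion2}. I expect $\Ker D^2\subseteq\Ker D$ to be the main obstacle.

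To overcome it I would reduce to the finite‑dimensional case. Fixing an eigenvalue $c$ of $\mathrm{Cas}^V_\g$, the representation $V$ contributes only finitely many $G$‑types $\tau_1,\dots,\tau_n$ there (the sphere $\|\lambda+\rho_\g\|^2=c+\|\rho_\g\|^2$ meets the weight lattice in finitely many points), so the corresponding summand of $V\otimes S$ is $\bigoplus_i(\tau_i\otimes S)\otimes M_i$ with $M_i$ the multiplicity spaces, and $D$ acts there as a block operator whose diagonal blocks are $D^{(\tau_i)}\otimes\mathrm{id}_{M_i}$, the cubic Dirac operator of the finite‑dimensional module $\tau_i$. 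By Kostant's theorem on $\u$‑cohomology, $\Ker D^2$ meets $(\tau_i\otimes S)\otimes M_i$ only in the harmonic part, a sum of copies of $\l$‑types of highest weight $w(\lambda_i+\rho_\g)-\rho_\l$ (with $w$ ranging over minimal coset representatives of $W_\l\backslash W_\g$), on which $D^{(\tau_i)}$ vanishes; since each $\lambda_i+\rho_\g$ is dominant and regular these families of $\l$‑types are pairwise disjoint for distinct $i$, and the off‑diagonal blocks of $D$, being $L$‑equivariant, must annihilate the harmonic part of $(\tau_i\otimes S)\otimes M_i$ because none of those $\l$‑types occurs in $\tau_j\otimes S$ for $j\neq i$ (again by the equality case of Kostant's norm estimate). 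Hence $D$ vanishes on $\Ker D^2$, as required. What remains — the precise dictionary between $\dif,\partial$ and $D^{\pm}$ (including the character twist), the verification of Kostant's formula in the present generality, and the standard functional‑analytic points about continuity and convergence of the spectral projections over quasi‑complete locally convex spaces — should be routine.
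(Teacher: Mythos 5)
Your plan coincides with the paper's: Kostant's cubic Dirac operator on $V\otimes S$, the spectral gap for $D^2$ coming from rationality of $B$, reduction to the finite-dimensional blocks $V_\lambda\otimes S$ for the algebraic Hodge decomposition, and the identification of $\RH_\bullet(\u,V)$ and $\RH^\bullet(\ov\u,V)$ with $\Ker D$. Two remarks on details that you weight differently than the paper does. The inclusion $\Ker D^2\subseteq\Ker D$ is not really the crux: each $G$-isotypic component $V(\lambda)\otimes S$ is already a $\RU(\g)\otimes\RC(\s)$-submodule (because $V(\lambda)$ is a $\g$-submodule of $V$), so $D$ has no off-diagonal blocks between distinct $V(\lambda)\otimes S$, and the inclusion is just the standard finite-dimensional fact on each block (cited in the paper from Huang--Pand\v{z}i\'c); your detour through the equality case of Kostant's norm estimate is unneeded. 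Conversely, the point you dismiss as routine --- continuity of the partial inverse $T$ of $D^2$ on a general quasi-complete locally convex space --- is where the paper's genuine technical effort lies: beyond the spectral gap $|c(\lambda,\mu)|\ge c_\g>0$, one needs the Cauchy--Schwartz bound $\sum_{\mu\in I(\lambda)}(\dim W_\mu)^2\le(\dim V_\lambda\cdot\dim S)^2$ to control each finite sum $T_\lambda=\sum_{\mu\in I(\lambda)}c(\lambda,\mu)^{-1}Q(\mu)$, Harish-Chandra's seminorm estimate $|\pi(\ov\chi_\lambda)v|_p\le(1+c(\lambda))^{-m}(\dim V_\lambda)^2|(1+\Omega_\g)^m v|_q$, and the convergence of $\sum_\lambda(\dim V_\lambda)^n(1+c(\lambda))^{-m}$, in order for the Fourier series $Tv=\sum_\lambda T_\lambda P(\lambda)v$ to converge absolutely; a spectral gap alone does not give continuity of spectral projections outside the Hilbert-space setting.
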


In the above theorem, the images $\Im \dif$ and $\Im \,\partial $, as well as the kernels  $\Ker \dif$ and $\Ker\,\partial$,  are all equipped with the subspace topologies.

In Section \ref{sec:proof}, we will utilize Dirac cubic operators to construct explicit splittings for the inclusion maps \eqref{eq:inclusion1} and \eqref{eq:inclusion2} in Theorem \ref{main}.

Our results are related to Vogan's conjecture through Hochschild-Serre spectral sequence, which is one motivation of our work. 
Recall that $V$ is a smooth representation of a real reductive group $G$. Note that $\u\cap \k $ is a nilpotent subalgebra of $\k$, where $\k=\g^\theta$ is the complexified Lie algebra of $K$.
  
Ignoring the topology, there exists a convergent spectral sequence $\{E_r^{p,q}\}_{r\geq 0}\Rightarrow \RH^{p+q}(\u,V)$ in the category of $\l\cap\k$-modules, which is called the Hochschild-Serre spectral sequence. The $E_1$-terms are given by $E_1^{p,q}=\RH^{n(p,q)}(\u\cap\k, V)\otimes X_p$. Here $X_p$ is a finite-dimensional vector space and $n(p,q)$ is a certain integer depending on $p$ and $q$. For more details on the Hochschild-Serre spectral sequence, we refer to \cite[Chapter V, Section 10]{KV}. 

Equipped with natural subquotient topology on $E_r$, the Hochschild-Serre spectral sequence suggests that one may prove Vogan's conjecture by establishing the Hausdorffness of $E_r$-terms for every $r\geq 0$. Along this strategy, we expect that the total cohomology space $\RH^\bullet(\u,V) $ is Hausdorff for certain smooth representations $V$ whose underlying $(\g,K)$-module are not necessarily of finite length. The Hausdorffness of $E_0$-terms is clear from the construction, and our results show that the $E_1$-terms are indeed Hausdorff. However, for $r\geq2$, the Hausdorffness of $E_r$-terms is still unresolved. We hope to investigate this in future research.

\begin{rmkl}
	For some non-compact real reductive groups $G$ and some  smooth representations $V$ of $G$, the cohomology space $\RH^\bullet(\ov\u,V)$ may not be isomorphic to the homology space  $\RH_\bullet(\u,V)$. For instance, a counterexample is provided in \cite[Section 8]{HPR05}: Let $ G = \SL_2(\mathbb{R}) $ and $ \mathfrak{q} = \mathfrak{so}_2 \oplus \mathfrak{u} $, where $ \mathfrak{u} = \mathbb{C}
	\begin{bmatrix}
		1 & \sqrt{-1} \\
		\sqrt{-1} & -1
	\end{bmatrix}$.
	Let $ M $ be a Harish-Chandra module for $G$ that is a nontrivial extension of a highest weight module of highest weight $-2$  by a trivial module. Then the dimension of $\RH^\bullet(\ov\u,M)$ is $3$ while the dimension of $\RH_\bullet(\u,M)$ is $1$.  Thus the  comparison theorem (cf.\, \cite[Theorem 1]{Bratten1998}) implies that $\RH^\bullet(\ov\u,M^{\min})\ncong \RH_\bullet(\u,M^{\min})$.
\end{rmkl}

\textbf{Acknowledgment:} The authors would like to thank Wei Xiao for suggesting the references \cite{Huang06,HPR05,Ko99} on the cubic Dirac operator. F.\,J. acknowledges support by the Deutsche Forschungsgemeinschaft (DFG, German Research Foundation) via the grant SFB-TRR 358/1 2023-491392403. B. Sun was supported in part by  National Key R \& D Program of China No. 2022YFA1005300 and New Cornerstone Science Foundation.

\section{Smooth representations of compact Lie groups}
From now on, $G$ is a (possibly disconnected) compact Lie group with complexified Lie algebra $\g$, and $(\pi,V)$ is a smooth representation of $G$. Fix a  $G$-invariant, non-degenerate, symmetric bilinear form $B$ on $\g$ whose restriction to $\Lie(G)$ is real valued and  negative definite.

Write $\wh{G}$ for the set of isomorphism classes of irreducible unitary representations of $G$. For every $\lambda\in\wh{G}$, fix an irreducible representation $(\pi_\lambda,V_\lambda)$ of class $\lambda$. Then the Casimir operator $\Omega_\g$ with respect to $B$ acts on $V_\lambda$ via the scalar multiplication by a non-negative real scalar $c(\lambda)$.

The normalized character $\chi_\lambda(g)=\dim V_\lambda \cdot \Tr \pi_\lambda (g)$ is an idempotent in the convolution algebra $C^\infty(G)$ with respect to the normalized Haar measure, which acts on the smooth representation $(\pi,V)$ of $G$ canonically. Denote by $V(\lambda)$  the $\lambda$-isotypic component of $V$, which is automatically closed in $V$. Then $\pi(\ov\chi_\lambda): V\rightarrow V$ is a continuous projection onto $V(\lambda)$. Here and henceforth, $``\,\ov{\phantom a}\,"$ over a character denotes the complex conjugation.

The following theorem is proved by Harish-Chandra (see \cite[Theorem 4.4.2.1]{Warner12}) under the assumption that $V$ is complete. The same proof works for general quasi-complete spaces as well.
\begin{thml}
	For every $v\in V$, the Fourier series
	\be
	\sum_{\lambda\in \wh{G}} \pi(\ov\chi_\lambda)v
	\ee
	converges absolutely to $v$.
\end{thml}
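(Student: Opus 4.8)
The plan is to run Harish-Chandra's classical argument (\cite[Theorem 4.4.2.1]{Warner12}), checking at each step that quasi-completeness of $V$, rather than completeness, is all that is needed; completeness is invoked only once, to form the sum of an absolutely convergent series, and an absolutely convergent series in a locally convex space has a bounded Cauchy net of finite partial sums, so quasi-completeness suffices. Since $(\pi,V)$ is a \emph{smooth} representation, every $v\in V$ is a smooth vector, which is what makes the Casimir estimate below available.

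First I would record an equicontinuity property: because the action map $G\times V\to V$ is continuous and $G$ is compact, the tube lemma shows that for every continuous seminorm $p$ on $V$ there is a continuous seminorm $q$ (depending only on $p$) with $p(\pi(g)v)\le q(v)$ for all $g\in G$ and $v\in V$. Consequently, for $f\in L^1(G)$ the operator $\pi(f)v=\int_G f(g)\pi(g)v\,dg$ is well defined (the integrand is continuous and bounded in each seminorm, and $V$ is quasi-complete) and satisfies $p(\pi(f)v)\le \|f\|_{L^1}\,q(v)$. I would also use the elementary convolution-algebra facts $\pi(f_1*f_2)=\pi(f_1)\pi(f_2)$ and $\ov\chi_\lambda*\ov\chi_\mu=\delta_{\lambda\mu}\,\ov\chi_\lambda$, so that the $\pi(\ov\chi_\lambda)$ are mutually orthogonal idempotents, each projecting onto $V(\lambda)$.

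The heart of the matter is a decay estimate for $\pi(\ov\chi_\lambda)v$ coming from the Casimir operator $\Omega_\g$. Since $\Omega_\g$ is $G$-invariant, $\pi(\Omega_\g)$ commutes with every $\pi(g)$, hence with each $\pi(\ov\chi_\lambda)$, and it acts on the closed subspace $V(\lambda)$ by the scalar $c(\lambda)\ge 0$. For $v\in V$ and an integer $N\ge 0$ put $w_N:=\pi\bigl((1+\Omega_\g)^N\bigr)v\in V$ (well defined because $v$ is smooth). Then, using that $\pi(\ov\chi_\lambda)$ and $\pi\bigl((1+\Omega_\g)^N\bigr)$ commute and that $\pi(\ov\chi_\lambda)v\in V(\lambda)$,
\[
\pi(\ov\chi_\lambda)\,w_N=\pi\bigl((1+\Omega_\g)^N\bigr)\pi(\ov\chi_\lambda)v=(1+c(\lambda))^N\,\pi(\ov\chi_\lambda)v,
\]
so that $\pi(\ov\chi_\lambda)v=(1+c(\lambda))^{-N}\pi(\ov\chi_\lambda)w_N$. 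Combining this with the equicontinuity bound and with $\|\chi_\lambda\|_{L^1}\le\|\chi_\lambda\|_{L^2}=\dim V_\lambda$ (Schur orthogonality on the probability space $G$), one gets, for every continuous seminorm $p$,
\[
p\bigl(\pi(\ov\chi_\lambda)v\bigr)\le (1+c(\lambda))^{-N}\,\dim V_\lambda\cdot q(w_N),
\]
with $q$ depending only on $p$. It therefore suffices to choose $N$ (independent of $p$ and $v$) large enough that $\sum_{\lambda\in\wh G}(1+c(\lambda))^{-N}\dim V_\lambda<\infty$; then the Fourier series converges absolutely in the quasi-complete space $V$, so its sum $s$ exists. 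The existence of such $N$ is exactly where the structure of the compact group $G$ enters: by the Weyl dimension formula $\dim V_\lambda$ is bounded by a polynomial in the highest weight, hence in $c(\lambda)^{1/2}$, while counting dominant weights (equivalently, Weyl's law for the Laplace operator attached to $-B$ on the manifold $G$) shows that $\#\{\lambda:c(\lambda)\le R\}$ grows polynomially in $R$; together these force the series to converge for $N$ large. This polynomial comparison of $\dim V_\lambda$ against the Casimir eigenvalue is the only genuinely non-formal ingredient, and I expect it to be the main point to get right.

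Finally I would check that $s=v$. For $f\in V^*$ the matrix coefficient $\phi(g):=f(\pi(g)v)$ lies in $C^\infty(G)$; since $\ov\chi_\lambda$ is a class function, $\pi(\ov\chi_\lambda)$ commutes with all $\pi(g)$, and a short computation identifies the function $g\mapsto f\bigl(\pi(g)\pi(\ov\chi_\lambda)v\bigr)$ with the left convolution $\chi_\lambda*\phi$. By the classical scalar Peter--Weyl theorem (for $C^\infty(G)$ under the regular representation) one has $\phi=\sum_{\lambda}\chi_\lambda*\phi$; evaluating at $g=e$ and pulling the continuous functional $f$ through the convergent series gives $f(v)=\phi(e)=\sum_\lambda f(\pi(\ov\chi_\lambda)v)=f(s)$. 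As $f\in V^*$ is arbitrary and $V$ is a Hausdorff locally convex space, $v=s$, which completes the proof.
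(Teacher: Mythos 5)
Your proposal is correct and takes essentially the same route as the paper, which simply cites Harish-Chandra's classical argument (Theorem 4.4.2.1 in Warner's book) and observes that quasi-completeness suffices. Your Casimir decay estimate and the convergence of $\sum_{\lambda\in\wh{G}}(1+c(\lambda))^{-N}\dim V_\lambda$ are precisely the two lemmas the paper quotes from Warner (Lemmas 4.4.2.2 and 4.4.2.3), and your identification of the limit via the scalar Peter--Weyl theorem and Hahn--Banach is a standard variant of the same classical proof.
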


As a consequence of the above theorem, we have an identification
\begin{equation}\label{idv00}
	V=\left\{
	\left(v(\lambda)\right)_\lambda\in \prod_{\lambda\in\wh{G}}V(\lambda)
	\mid
	\sum_{\lambda\in \wh{G}} v(\lambda) \text{ is absolutely convergent }      \right\}.
\end{equation}

\begin{leml}\label{prop}
	Let $W\subseteq V$ be a subspace. Assume that for every $\lambda\in\wh{G}$, the subspace $W(\lambda):=\pi(\ov\chi_\lambda)W$ is closed in $V$ and contained in $W$.
	Then under the identification \eqref{idv00}, the closure of $W$ in $V$ equals
	\bee
	\left\{\left(w(\lambda)\right)_\lambda\in \prod_{\lambda\in\wh{G}}W(\lambda)\mid \sum_{\lambda\in \wh{G}} w(\lambda) \text{ is absolutely convergent }\right\}.
	\eee
	Consequently,
	\bee
 \ov{W}= \ov{\bigoplus_{\lambda\in \wh{G}}W(\lambda)}.
	\eee
	Here, $``\,\ov{\phantom a}\,"$ over subsets of $V$ denotes the closure in $V$.
\end{leml}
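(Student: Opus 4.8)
The plan is to exploit the Fourier decomposition \eqref{idv00} and reduce the closure computation to a statement about countable products. First I would recall that $V$ is realized as the space of absolutely convergent series $\sum_\lambda v(\lambda)$ with $v(\lambda)\in V(\lambda)$, and that each projection $\pi(\ov\chi_\lambda)$ is continuous with image $V(\lambda)$. Denote by $W'$ the set described in the statement, i.e.
\[
W'=\left\{\left(w(\lambda)\right)_\lambda\in \prod_{\lambda\in\wh{G}}W(\lambda)\mid \sum_{\lambda\in \wh{G}} w(\lambda) \text{ is absolutely convergent}\right\}\subseteq V.
\]
I want to show $\ov{W}=W'$. The inclusion $\ov{W}\subseteq W'$ is the easy direction: since each $W(\lambda)$ is closed and $\pi(\ov\chi_\lambda)W\subseteq W$, for any $v\in\ov{W}$ we have $\pi(\ov\chi_\lambda)v\in\ov{\pi(\ov\chi_\lambda)W}=\ov{W(\lambda)}=W(\lambda)$ by continuity, so every Fourier component of $v$ lies in $W(\lambda)$; hence $v\in W'$.

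For the reverse inclusion $W'\subseteq\ov{W}$, I would argue that $W'$ is contained in the closure of $\bigoplus_{\lambda}W(\lambda)$, which in turn is contained in $\ov{W}$ since $\bigoplus_\lambda W(\lambda)\subseteq W$ (each $W(\lambda)\subseteq W$ by hypothesis). Concretely, given $\left(w(\lambda)\right)_\lambda\in W'$, the absolute convergence of $\sum_\lambda w(\lambda)$ means the partial sums over finite subsets $F\subseteq\wh G$ form a net converging to the element $v:=\sum_\lambda w(\lambda)$ in $V$; each such partial sum lies in $\bigoplus_\lambda W(\lambda)$, so $v\in\ov{\bigoplus_\lambda W(\lambda)}\subseteq\ov W$. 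Combined with the first paragraph this gives $\ov W=W'$, and the "consequently" clause follows immediately: $\ov{W}=W'\supseteq\bigoplus_\lambda W(\lambda)$ and $\ov W\subseteq\ov{\bigoplus_\lambda W(\lambda)}$ (again since every element of $W'$ is a limit of finite partial sums), while the opposite inclusion $\ov{\bigoplus_\lambda W(\lambda)}\subseteq\ov W$ is clear, so $\ov W=\ov{\bigoplus_\lambda W(\lambda)}$.

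The one point that needs care — and I expect it to be the only genuine subtlety — is the interplay between the "absolutely convergent" condition defining $V$ and the subspace topology: I must check that the description of $W'$ is intrinsic, i.e.\ that absolute convergence of $\sum_\lambda w(\lambda)$ as a series in $V$ is exactly what is needed for the net of finite partial sums to converge in $V$, and that this is compatible with the identification \eqref{idv00}. This is handled by Harish-Chandra's theorem (the preceding Theorem) together with the observation that absolute convergence in a quasi-complete locally convex space implies convergence of the net of partial sums over finite subsets. No completeness of $W$ itself is used, only closedness of the pieces $W(\lambda)$; this is why quasi-completeness of $V$ suffices throughout.
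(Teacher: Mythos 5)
Your proof is correct and follows the same line of reasoning as the paper: the inclusion $\ov W\subseteq W'$ comes from continuity of $\pi(\ov\chi_\lambda)$ together with closedness of $W(\lambda)$, and the reverse inclusion is the convergence of finite partial sums lying in $\bigoplus_\lambda W(\lambda)\subseteq W$. The paper's proof consists only of the displayed containment $\pi(\ov\chi_\lambda)(\ov W)\subset W(\lambda)$ followed by ``this implies the lemma,'' so you have merely made explicit what the authors leave implicit.
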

\begin{proof}
	We have that 
	\[
	\pi(\ov \chi_\lambda)(\ov W)\subset \ov{\pi(\ov \chi_\lambda)( W)}=\ov{W(\lambda)}=W(\lambda).
	\]
	This implies the lemma. 
\end{proof}

As usual we write  $\BN:=\{0,1,2,\dots\}$. We conclude this section by quoting two lemmas that will be used in Section \ref{sec:proof} for proving the convergence of some Fourier series.

\begin{leml}\cite[Lemma 4.4.2.2]{Warner12}\label{lemma1}
	For every continuous seminorm $\abs{\,\cdot\,}_p$ on $V$, there exists a continuous seminorm $\abs{\,\cdot\,}_q$ on $V$ such that
	\be
	\abs{\pi\left(\ov\chi_\lambda\right) v}_p\leq \left(1+c\left(\lambda\right)\right)^{-m}\cdot (\dim V_\lambda)^2\cdot \abs{\left(1+\Omega_\g\right)^m v}_q
	\ee
	for all $m\in \BN$,  $v\in V$, and $\lambda\in \wh{G}$.
\end{leml}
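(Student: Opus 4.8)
The plan is to reduce the estimate to two elementary ingredients. The first is algebraic: on the $\lambda$-isotypic component $V(\lambda)=\pi(\ov\chi_\lambda)V$, the operator $\pi(\Omega_\g)$ acts by the scalar $c(\lambda)$, and $\pi(\Omega_\g)$ commutes with $\pi(\ov\chi_\lambda)$; from this one extracts the exact power $(1+c(\lambda))^{-m}$ through a single identity. The second is soft analysis: the operator $\pi(\ov\chi_\lambda)$ is given by integration against $\chi_\lambda$, so its effect on any continuous seminorm is controlled by $\norm{\chi_\lambda}_{L^1(G)}$ and by the equicontinuity of the family $\{\pi(g):g\in G\}$. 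Putting the two together gives the claimed bound with a seminorm $\abs{\,\cdot\,}_q$ that depends only on $\abs{\,\cdot\,}_p$.

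First I would record that $\pi(\Omega_\g)\colon V\to V$ is a continuous linear operator: since $V$ is a smooth representation, the differentiated action $\pi\colon\g\to\End(V)$ takes values in continuous operators, and $\Omega_\g$ is a polynomial in elements of $\g$. Because $B$ is $G$-invariant, $\Omega_\g$ is $\Ad(G)$-invariant, so $\pi(g)\,\pi(\Omega_\g)\,\pi(g)^{-1}=\pi(\Ad(g)\Omega_\g)=\pi(\Omega_\g)$ for every $g\in G$; being continuous, $\pi(\Omega_\g)$ therefore commutes with the Gelfand--Pettis integral $\pi(\ov\chi_\lambda)=\int_G\ov{\chi_\lambda(g)}\,\pi(g)\,dg$. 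Next, it follows from the hypothesis that $\Omega_\g$ acts on $V_\lambda$ by $c(\lambda)$, together with the definition of $V(\lambda)$ as the $\lambda$-isotypic component — each of its vectors spans, under $\pi(G)$, a finite-dimensional $\lambda$-isotypic subrepresentation, since the linear span of $\{\pi_\lambda(g):g\in G\}$ is all of $\End(V_\lambda)$ — that $\pi(\Omega_\g)$ acts on $V(\lambda)$ by the scalar $c(\lambda)$. Hence, for $v\in V$, writing $w:=\pi(\ov\chi_\lambda)v\in V(\lambda)$, we have $(1+\Omega_\g)^m w=(1+c(\lambda))^m w$, and commuting $(1+\Omega_\g)^m$ past $\pi(\ov\chi_\lambda)$ yields the key identity
\[
\pi(\ov\chi_\lambda)v=w=(1+c(\lambda))^{-m}\,(1+\Omega_\g)^m w=(1+c(\lambda))^{-m}\,\pi(\ov\chi_\lambda)\big[(1+\Omega_\g)^m v\big],\qquad m\in\BN.
\]

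It then remains to estimate, for arbitrary $u\in V$, the quantity $\abs{\pi(\ov\chi_\lambda)u}_p$. Writing $\pi(\ov\chi_\lambda)u=\int_G\ov{\chi_\lambda(g)}\,\pi(g)u\,dg$ as a weak ($=$ Gelfand--Pettis) integral — which exists because $G$ is compact, the integrand is continuous, and $V$ is quasi-complete — the standard seminorm inequality for such integrals gives $\abs{\pi(\ov\chi_\lambda)u}_p\le\norm{\chi_\lambda}_{L^1(G)}\cdot\sup_{g\in G}\abs{\pi(g)u}_p$. Since $G$ is compact and the action is continuous, the family $\{\pi(g)\}_{g\in G}$ is equicontinuous, so there is a continuous seminorm $\abs{\,\cdot\,}_q$ on $V$, depending only on $\abs{\,\cdot\,}_p$ (in particular not on $m$, $\lambda$, or $v$), with $\sup_{g\in G}\abs{\pi(g)u}_p\le\abs{u}_q$ for all $u$. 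Finally $\norm{\chi_\lambda}_{L^1(G)}=\dim V_\lambda\cdot\norm{\Tr\pi_\lambda}_{L^1(G)}\le\dim V_\lambda\cdot\norm{\Tr\pi_\lambda}_{L^\infty(G)}\le(\dim V_\lambda)^2$, because $\Tr\pi_\lambda(g)$ is a sum of $\dim V_\lambda$ complex numbers of modulus $1$. Substituting $u=(1+\Omega_\g)^m v$ into this estimate and invoking the key identity yields exactly
\[
\abs{\pi(\ov\chi_\lambda)v}_p\le(1+c(\lambda))^{-m}\cdot(\dim V_\lambda)^2\cdot\abs{(1+\Omega_\g)^m v}_q.
\]

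The argument is essentially routine, and I do not expect a genuine obstacle; the points that must be handled with a little care are the two soft-analysis inputs — the existence of the weak integral and the associated seminorm inequality in a merely quasi-complete (rather than Banach or complete) space, and the equicontinuity of $\pi(G)$ — and the bookkeeping observation that the seminorm $\abs{\,\cdot\,}_q$ produced by equicontinuity is indeed independent of $m$, $\lambda$, and $v$. If one prefers not to use the $\Ad(G)$-invariance of $\Omega_\g$, the key identity can instead be obtained from the absolutely convergent Fourier expansion $v=\sum_\mu\pi(\ov\chi_\mu)v$ recalled above: applying the continuous operator $(1+\Omega_\g)^m$ term by term, using that it acts by $(1+c(\mu))^m$ on $V(\mu)$, and then applying the continuous projection $\pi(\ov\chi_\lambda)$, all cross terms vanish by the orthogonality of the idempotents $\chi_\mu$, leaving $\pi(\ov\chi_\lambda)(1+\Omega_\g)^m v=(1+c(\lambda))^m\pi(\ov\chi_\lambda)v$.
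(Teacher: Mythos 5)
The paper gives no proof of this lemma at all---it is quoted directly from Warner \cite[Lemma 4.4.2.2]{Warner12}---and your argument is a correct reconstruction of the standard proof behind that citation: the Casimir $\Omega_\g$ acts by $c(\lambda)$ on the $\lambda$-isotypic component and commutes with $\pi(\ov\chi_\lambda)$, which produces the factor $(1+c(\lambda))^{-m}$, while the bound $\norm{\chi_\lambda}_{L^1(G)}\le(\dim V_\lambda)^2$ together with the equicontinuity of $\{\pi(g)\mid g\in G\}$ (which indeed follows from compactness of $G$ and joint continuity of the action, with no completeness or barrelledness needed) yields the seminorm $\abs{\,\cdot\,}_q$ independently of $m$, $\lambda$, and $v$. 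The soft-analysis points you flag (existence of the Gelfand--Pettis integral and the seminorm inequality in a quasi-complete space) go through exactly as you say, so the proposal is correct and essentially coincides with the cited argument.
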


\begin{leml}\label{lemma2}
	For every $n\in \BN$, there exists an $m\in \BN$ such that the series
	\[
	\sum_{\lambda\in \wh{G}} (\dim V_\lambda)^n (1+c(\lambda))^{-m}
	\]
	converges absolutely.
\end{leml}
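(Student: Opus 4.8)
The plan is to deduce the statement from two classical facts about compact groups: the Weyl dimension formula, which bounds $\dim V_\lambda$ polynomially in the highest weight of $V_\lambda$, and the Kostant formula for the Casimir eigenvalue, which makes $c(\lambda)$ grow at least quadratically in that highest weight. Granting these, for a fixed $n$ a sufficiently large power $(1+c(\lambda))^{-m}$ dominates $(\dim V_\lambda)^n$ and still decays fast enough to be summed over the lattice of highest weights, which is a $p$-series type estimate.

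\emph{Step 1: reduction to $G$ connected.} Let $G^0\subseteq G$ be the identity component and $d_0:=[G:G^0]<\infty$. For $\lambda\in\wh G$, Clifford theory gives that $V_\lambda|_{G^0}$ is a direct sum of at most $d_0$ irreducible $G^0$-modules forming a single $G^0$-conjugacy class; since $B$ is $G$-invariant, $\Omega_\g$ acts on each of these constituents by the same scalar, so $c(\lambda)$ equals the Casimir eigenvalue $c^{G^0}(\mu)$ for any constituent $\mu$, and $\dim V_\lambda\le d_0\,\dim V^{G^0}_\mu$. Conversely at most $d_0$ classes in $\wh G$ lie over a given irreducible of $G^0$. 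Hence $\sum_{\lambda\in\wh G}(\dim V_\lambda)^n(1+c(\lambda))^{-m}$ is bounded by $d_0^{\,n+1}$ times the analogous series for $G^0$, and we may assume $G$ is connected.

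\emph{Step 2: the two estimates.} Fix a maximal torus of $G$, a positive system $\Delta^+$, let $\rho$ be half the sum of the positive roots, $N:=|\Delta^+|$, $r:=\operatorname{rank}G$. The form $B$ induces a positive-definite inner product $\langle\,\cdot\,,\,\cdot\,\rangle$ with norm $\|\cdot\|$ on the real span of the weights, and the highest weights of irreducibles lie in a lattice of rank $r$. From the Weyl dimension formula $\dim V_\lambda=\prod_{\alpha\in\Delta^+}\langle\lambda+\rho,\alpha\rangle/\langle\rho,\alpha\rangle$ and Cauchy--Schwarz one gets $\dim V_\lambda\le C_1\,(1+\|\lambda+\rho\|)^{N}$. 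The Kostant formula gives $c(\lambda)=\|\lambda+\rho\|^2-\|\rho\|^2=\|\lambda\|^2+2\langle\lambda,\rho\rangle$, and since $\langle\lambda,\rho\rangle\ge 0$ for dominant $\lambda$ this yields at once $c(\lambda)\ge\|\lambda\|^2\ge 0$ and $1+\|\lambda+\rho\|^2\le C_2\,(1+c(\lambda))$. Combining, $(\dim V_\lambda)^{n}\le C_3\,(1+c(\lambda))^{nN/2}$, so with $s:=m-nN/2$,
\[
(\dim V_\lambda)^{n}(1+c(\lambda))^{-m}\ \le\ C_3\,(1+c(\lambda))^{-s}.
\]

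\emph{Step 3: summation.} Using $c(\lambda)\ge\|\lambda\|^2$ and the standard bound on the number of points of a rank-$r$ lattice in a ball, $\#\{\lambda:\ c(\lambda)<R\}\le C_4\,(1+R)^{r/2}$. Splitting the sum according to the dyadic ranges $1+c(\lambda)\in[2^{j},2^{j+1})$ with $j\ge 0$, one obtains $\sum_\lambda(1+c(\lambda))^{-s}\le C_5\sum_{j\ge 0}2^{j(r/2-s)}<\infty$ whenever $s>r/2$. Thus any integer $m>nN/2+r/2$ makes the series converge absolutely.

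There is no essential difficulty here; the two points needing a little care are the disconnected-to-connected reduction, handled by Clifford theory as above, and checking that the term $-\|\rho\|^2$ in the Casimir eigenvalue does not spoil the lower bound $c(\lambda)\gtrsim\|\lambda\|^2$ — which it does not, precisely because dominance forces $\|\lambda+\rho\|\ge\|\rho\|$. (Alternatively, Step 3 may be replaced by the observation that $1+\Omega_\g$ acts on $C^\infty(G)$ as a positive elliptic operator of order two, so $(1+\Omega_\g)^{-m}$ has a continuous Schwartz kernel for $m$ large; evaluating its Peter--Weyl expansion at the identity gives $\sum_\lambda(\dim V_\lambda)^2(1+c(\lambda))^{-m}<\infty$, which together with the Weyl dimension bound of Step 2 yields the claim.)
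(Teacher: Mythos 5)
Your proof is correct, and it is essentially the argument the paper itself delegates to the citation of Warner's Lemma 4.4.2.3: polynomial growth of $\dim V_\lambda$ via the Weyl dimension formula, the quadratic lower bound $c(\lambda)\geq\|\lambda\|^2$ from the Casimir eigenvalue formula, and lattice-point counting over the dominant weights. The only added content is your explicit reduction to the connected case via Clifford theory (and the treatment of a central torus), which is precisely the ``variation'' the paper's one-line proof alludes to when it says Warner's proof ``also works in our context.''
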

\begin{proof}
	This lemma is a variation of \cite[Lemma 4.4.2.3]{Warner12}, whose proof also works in our context.
\end{proof}

\section{Cubic Dirac operator and $\u$-cohomology}\label{sec:Dirac op}

In this section, we review the definition and basic properties of Kostant's cubic Dirac operator. For more details about Dirac operators, we refer to \cite{Huang06,HPR05,Ko99}.

We retain the notations from previous sections. In particular, $\g$ is the complexified Lie algebra of a compact Lie group $G$, $\q=\l\oplus\u$ is a parabolic subalgebra and $L=N_G(\q)$ is the Levi subgroup.

From this point onward, we assume that the bilinear form $B$ extends the Killing form on the semisimple part $\g^{ss}$ of $\g$. 

From now on, we fix a Cartan subalgebra $\h$ of $\g$ contained in $\l$. Then $B$ induces on $\h^*$ a non-degenerate bilinear form which we denote by $(\cdot,\cdot)$. Write $\Delta_\g=\Delta(\g, \h)$ and $\Delta_\l=\Delta(\l,\h)$ for the root systems of $\g$ and $\l$ with respect to $\h$, respectively. Write $\Delta(\u)$ for the set of roots of $\h$ occurring in $\u$. Fix a positive root system $\Delta_\g^+$ containing $\Delta(\u)$. Then $\Delta_\l^+: =\Delta_\l\cap \Delta_\g^+$ is a positive root system of $\l$ and $\Delta_\g^+=\Delta_\l^+\cup\Delta(\u)$. Write $\rho_\g$ and $\rho_\l$ for the half sums of the roots in $ \Delta_\g^+$ and $\Delta_\l^+$, respectively. We will use the notation $\rho(\u)$ for the half sum of the roots occurring in $\u$. Then $\rho_\g=\rho_\l+\rho(\u)$.

There is an orthogonal decomposition $\g=\l\oplus\s$ with $\s=\u\oplus\ov\u$.
Note that the restriction of $B$ to $\s$ is non-degenerate. 

Recall the Clifford algebra 
\[
\RC(\s):=\RT(\s)/(\textrm{the ideal generated by } \{x\otimes x+B(x,x):x\in\s\}),
\] 
where $\RT(\s)$ is the tensor algebra over $\s$. 
Consequently, $\RC(\s)$ is generated by $\s$ with the following defining relations:
\be\label{relation in Clifford alge}
x\cdot x= - B(x,x),\quad x\in \s. 
\ee

Since $B$ is $G$-invariant, the adjoint action of $\l$ on $\s$ induces a Lie algebra homomorphism
\[
\l \rightarrow \s\o (\s).
\]
Compositing it with the embedding of $\s\o(\s)$ into the Clifford algebra $\RC(\s)$ (cf.\, \cite[Section 2.1.9]{Huang06}), we obtain a Lie algebra homomorphism
\[
\nu:\l \rightarrow \RC(\s).
\]
Then we embed the Lie algebra $\l$ into $\RU(\g)\otimes \RC(\s)$ via
\[
X\longmapsto X\otimes 1+1\otimes \nu(X),
\]
where $\RU$ indicates the universal enveloping algebra. This embedding extends to an algebra homomorphism
\be\label{embeddingofl}
\gamma\colon \RU(\l)\longrightarrow \RU(\g)\otimes \RC(\s).
\ee

 We embed the exterior algebra $\wedge^\bullet \s$ into $\RT(\s)$ as a subspace via
\[
x_1\wedge\cdots \wedge x_k \longmapsto \frac{1}{k!}\sum_{\sigma\in S_k}\sgn(\sigma) x_{\sigma(1)}\otimes\cdots\otimes x_{\sigma(k)} \quad (k\in \mathbb{N}),
\]
where $S_k$ is the group of permutations on $\{1,\cdots,k\}$, $\sgn(\sigma)$ is the sign of a permutation $\sigma$.

The Chevalley map $\varphi: \wedge^\bullet \s\rightarrow \RC(\s)$ is obtained by composing this embedding with the natural homomorphism $\RT(\s)\rightarrow \RC(\s)$. Let $\{Z_1,\dots,Z_n\}$ be an orthonormal basis of $\s$. Then the Chevalley map $\varphi$ is the linear map determined by the following formulas:
\[
\varphi (Z_{i_1}\wedge\cdots\wedge Z_{i_k})= Z_{i_1}\cdots Z_{i_k} 
\] 
where $1\leq i_1<\cdots<i_k\leq n.$ The Chevalley map is an isomorphism of vector spaces.

 Note that the bilinear form $B$ induces a degree-preserving identification $\wedge^\bullet \s\cong\wedge^\bullet (\s^*)$. Let $v\in\wedge^3\s$ be the element corresponding to the $3$-form $\omega \in \wedge^3 (\s^*) $ such that 
 \be\label{dfn:cubicterm}
 2\omega(X,Y,Z)=B([X,Y],Z)
 \ee
 for all $X,Y,Z\in \s$. To be explicit,
 \be\label{eq:cubicterm}
 v=\cfrac{1}{2}\sum_{1\leq i<j<k\leq n} B\left([Z_i,Z_j],Z_k\right)Z_i\wedge Z_j\wedge Z_k.
 \ee
The Kostant's cubic Dirac operator $D$ with respect to $(\g,\l)$ is the element 
\[
D=\sum_{i=1}^{n} Z_i\otimes Z_i+1\otimes \varphi(v)\in \mathrm \RU(\g)\otimes \mathrm \RC(\s).
\]
The definition of $D$ is independent with the choice of the orthonormal basis. 

The following result, which is crucial for our application, is proved by Kostant in {\cite[Theorem 2.16]{Ko99}} in a more general setting. 

\begin{thml}\label{square}
	The equality 
	\be\label{esq}
	D^2=-\Omega_\g\otimes 1+\gamma(\Omega_\l)+ \con
	\ee
	holds in $\RU(\g)\otimes \RC(\s)$, where $\Omega_\g$ and $\Omega_\l$ are the Casimir elements for $\g$ and $\l$ with respect to $B$,  and
	\[
	\con=(\rho_\l,\rho_\l)-(\rho_\g,\rho_\g),
	\]
	where $\rho_{\g},\rho_{\l}\in \h^*$ are the half sums of positive roots of $\g,\l$, respectively.
\end{thml}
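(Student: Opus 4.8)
The goal is to prove Theorem~\ref{square}, the square formula $D^2=-\Omega_\g\otimes 1+\gamma(\Omega_\l)+\con$ in $\RU(\g)\otimes\RC(\s)$. Since $D=\sum_i Z_i\otimes Z_i+1\otimes\varphi(v)$, the plan is to expand $D^2$ into three pieces: the ``pure Dirac'' part $(\sum_i Z_i\otimes Z_i)^2$, the cross terms $(\sum_i Z_i\otimes Z_i)(1\otimes\varphi(v))+(1\otimes\varphi(v))(\sum_i Z_i\otimes Z_i)$, and the ``cubic'' part $(1\otimes\varphi(v))^2$, and then recombine. I would set up once and for all an orthonormal basis $\{Z_1,\dots,Z_n\}$ of $\s$ with respect to $B$ and write all structure constants as $B([Z_i,Z_j],Z_k)$, which by $G$-invariance of $B$ are totally antisymmetric in $i,j,k$.

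First I would compute $(\sum_i Z_i\otimes Z_i)^2=\sum_{i,j}Z_iZ_j\otimes Z_iZ_j$. Splitting into $i=j$ and $i\neq j$, the diagonal contributes $\sum_i Z_i^2\otimes Z_i^2=\sum_i Z_i^2\otimes(-B(Z_i,Z_i))=-\sum_i Z_i^2=-\Omega_\s$, where $\Omega_\s:=\sum_i Z_i^2\in\RU(\g)$; the off-diagonal part can be symmetrized using $Z_iZ_j=\frac12[Z_i,Z_j]+\frac12(Z_iZ_j+Z_jZ_i)$ in $\RU(\g)$ and $Z_iZ_j=\frac12[Z_i,Z_j]_{\RC}+\frac12(Z_iZ_j+Z_jZ_i)$ in $\RC(\s)$ (the anticommutator being $-2B(Z_i,Z_j)=0$ for $i\neq j$). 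Carefully tracking signs, the ``bracket $\times$ bracket'' term produces a cubic-like expression, and the ``bracket $\times$ symmetric'' and ``symmetric $\times$ bracket'' terms will partially reassemble into $\gamma(\Omega_\l)$ once one recalls $\gamma(\Omega_\l)=\sum_a X_aX^a\otimes 1+2\sum_a X_a\otimes\nu(X^a)+1\otimes\nu(\Omega_\l)$ for a basis $\{X_a\}$ of $\l$ with dual basis $\{X^a\}$ and uses the orthogonal decomposition $\Omega_\g=\Omega_\l+\Omega_\s$.

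Next I would handle the cross terms. Using $\varphi(v)=\frac12\sum_{i<j<k}B([Z_i,Z_j],Z_k)\,Z_iZ_jZ_k$ and the Clifford relations, $[\,Z_\ell\otimes Z_\ell,\,1\otimes\varphi(v)\,]$ reduces to computing commutators/anticommutators of $Z_\ell$ with a degree-$3$ Clifford monomial; the key identity is that $Z_\ell\cdot\varphi(v)+\varphi(v)\cdot Z_\ell$ (or the difference, depending on parity) is again expressible through structure constants, and summing over $\ell$ against $Z_\ell$ in the $\RU(\g)$-slot yields a term that cancels part of the ``bracket $\times$ bracket'' contribution from the first step. Finally $(1\otimes\varphi(v))^2=1\otimes\varphi(v)^2$, and $\varphi(v)^2\in\RC(\s)$ is a pure constant-plus-degree-$4$ element; the degree-$4$ part must cancel (this reflects the Jacobi identity), and the constant is exactly $(\rho_\l,\rho_\l)-(\rho_\g,\rho_\g)$ by the standard computation $\varphi(v)^2=\frac{1}{24}\sum B([Z_i,Z_j],Z_k)^2\cdot(\text{sign bookkeeping})$, which via the strange formula of Freudenthal--de Vries equals $\|\rho_\g\|^2-\|\rho_\l\|^2$ up to sign and the orthogonality of $\s$ and $\l$. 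I would either cite Kostant's \cite[Theorem 2.16]{Ko99} for this constant or record the Freudenthal--de Vries computation.

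The main obstacle I anticipate is bookkeeping: keeping the $\RU(\g)$-slot and the $\RC(\s)$-slot signs consistent when passing between ordered monomials and (anti)symmetrized expressions, and verifying that the degree-$4$ Clifford terms arising separately from $(\sum Z_i\otimes Z_i)^2$, from the cross terms, and from $\varphi(v)^2$ all cancel against one another --- this cancellation is equivalent to the Jacobi identity for $\s$ and is where a sign error would be fatal. A clean way to organize this is to first prove the identity after applying the principal symbol/filtration argument (reducing modulo lower-order terms in $\RU(\g)$), handle the top-degree-in-$\RC(\s)$ part, and then descend; but since the cleanest reference already exists, the honest plan is: reduce to Kostant's setting by checking that our $(\g,\l)$ with the chosen $B$ (extending the Killing form on $\g^{ss}$, $B$-orthogonal decomposition $\g=\l\oplus\s$) satisfies his hypotheses, and then invoke \cite[Theorem 2.16]{Ko99} directly, with the constant $\con=(\rho_\l,\rho_\l)-(\rho_\g,\rho_\g)$ identified via his formula.
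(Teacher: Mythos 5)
Your bottom line --- check that $(\g,\l,B)$ satisfies Kostant's hypotheses and invoke \cite[Theorem 2.16]{Ko99} --- is exactly what the paper does: the theorem is stated with a one-line attribution to Kostant and carries no new proof. The one point you should make explicit when ``reducing to Kostant's setting'' is the Clifford sign convention: the paper follows \cite{Huang06} with $x\cdot x=-B(x,x)$, whereas \cite{Ko99} uses $x\cdot x=B(x,x)$, so the cubic term $v$ and the formula $D^2=-\Omega_\g\otimes 1+\gamma(\Omega_\l)+\con$ each differ from \cite[Formulas (1.20) and (2.17)]{Ko99} by signs --- this is precisely the bookkeeping hazard you flagged in your direct-computation sketch, and the paper disposes of it in the remark immediately following the theorem.
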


\begin{rmkl}
	Our definition of Clifford algebra follows the one in \cite{Huang06}. However, Kostant uses a different definition of the Clifford algebra $\RC(\s)$ in \cite{Ko99}, which requires $x \cdot x = B(x, x)$ for $x \in \s$.  Therefore, equalities \eqref{dfn:cubicterm} and \eqref{esq} are respectively different from  \cite[Formula (1.20)]{Ko99} and \cite[Formula (2.17)]{Ko99} by some signs. 
\end{rmkl}

Since $\s=\u\oplus\ov\u$ is even-dimensional, the Clifford algebra $\RC(\s)$ has a unique irreducible module, which is called the Spin module of $\RC(\s)$.  
Fix a nonzero element $\ov u_{\mathrm{top}}$ in $\wedge^{\mathrm{top}}\ov\u$, and define a linear isomorphism
\be\label{eq:Spin}
\wedge^\bullet\u\longrightarrow \varphi(\wedge^\bullet \u\otimes \wedge^{\mathrm{top}}\ov\u) \qquad x\mapsto \varphi(x\otimes \ov u_{\mathrm{top}})
\ee 
where $\varphi$ is the Chevalley map, and $\wedge^\bullet \u\otimes \wedge^{\mathrm{top}}\ov\u$ is a subspace of $\wedge^\bullet\s=\wedge^\bullet \u\otimes \wedge^\bullet\ov\u$. Note that $\varphi(\wedge^\bullet \u\otimes \wedge^{\mathrm{top}}\ov\u)$ is the left ideal of $\RC(\s)$ generated by $\varphi(\wedge^{\mathrm{top}}\ov\u) $. 
Denote by $S$ the vector space $\wedge^\bullet\u$, together with the $\RC(\s)$-action induced by the linear isomorphism \eqref{eq:Spin}. 
Then $S$ is a Spin module of $\RC(\s)$ (see \cite[Section 2.2.2]{Huang06}).

The Killing form (and hence also $B$) induces a non-degenerate pairing $\u\times\ov\u\to\C$ allowing us to identify the dual of $\ov\u$ with $\u$ in a canonical way. In particular, for a smooth representation $V$ of $G$, one obtains a canonical degree-preserving, $L$-equivariant isomorphism 
\begin{equation}
	\wedge^\bullet\u\otimes V\longrightarrow \wedge^\bullet (\ov\u^*)\otimes V
	\label{eq:dualcomplexisomorphism}
\end{equation}
between complexes computing $\RH_\bullet(\u,V)$ and $\RH^\bullet(\ov\u,V)$, which allows us to consider the coboundary map $\dif\,$ on the right hand side and the boundary map $\partial$ on the left hand side as being defined on the same space $\wedge^\bullet\u\otimes V=V\otimes S$.

\begin{thml}\cite[Proposition 9.1.6]{Huang06}\label{thm:huang}
	Under the action of $\RU(\g)\otimes \RC(\s)$ on $V\otimes S$, the cubic Dirac operator D acts on $V\otimes S $ as $2\partial+\dif\,$.
\end{thml}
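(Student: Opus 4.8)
Since $D$, $\partial$ and $\dif$ are given by formulas involving only the $\g$-module structure of $V$ (and $S$ is finite-dimensional), the assertion is purely algebraic: it suffices to verify $D=2\partial+\dif$ as an identity of operators on $V\otimes S$ for an arbitrary $\g$-module $V$, equivalently as an identity in $\RU(\g)\otimes\RC(\s)$ once one notes that $2\partial+\dif$ is induced by an element there. The plan is a direct computation in a basis adapted to $\s=\u\oplus\ov\u$.

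First I would exploit that $\u$ and $\ov\u$ are totally isotropic for $B$ (any two roots occurring in $\u$ are positive, hence do not sum to $0$) and that $B$ restricts to a perfect pairing $\u\times\ov\u\to\C$: fix bases $\{u_1,\dots,u_d\}$ of $\u$ and $\{\bar u_1,\dots,\bar u_d\}$ of $\ov\u$ with $B(u_i,\bar u_j)=\delta_{ij}$, where $d=\tfrac12 n$. Then $\{\bar u_1,\dots,\bar u_d,u_1,\dots,u_d\}$ is the $B$-dual basis of $\{u_1,\dots,u_d,\bar u_1,\dots,\bar u_d\}$, so $\sum_{i=1}^n Z_i\otimes Z_i=\sum_{j=1}^d\bigl(u_j\otimes\bar u_j+\bar u_j\otimes u_j\bigr)$ and hence
\[
D=\sum_{j=1}^d u_j\otimes\bar u_j+\sum_{j=1}^d\bar u_j\otimes u_j+1\otimes\varphi(v).
\]
Next I would record the Clifford action on $S=\wedge^\bullet\u$ coming from the realization \eqref{eq:Spin}: an element $u\in\u$ acts by exterior multiplication $u\wedge(-)$, and an element $\bar u\in\ov\u$ acts by a scalar multiple of the $B$-contraction, the scalar being $-2$ because \eqref{relation in Clifford alge} forces $u_i\bar u_j+\bar u_j u_i=-2B(u_i,\bar u_j)$ while exterior multiplication by $u_i$ and the $B$-contraction by $\bar u_j$ have anticommutator $B(u_i,\bar u_j)\cdot\mathrm{id}$. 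Thus on $S$ the operator ``contract the $u_k$-slot'' is Clifford left multiplication by $-\tfrac12\bar u_k$, and ``wedge with $u_k$'' is Clifford left multiplication by $u_k$.

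Now I would split $\partial$ and $\dif$ each into its linear part (using the $\g$-action on $V$) and its bracket part (trivial on $V$), and match these against the three summands of $D$. The linear part of $\partial$ deletes one leg of an element of $\wedge^\bullet\u$ and acts on $V$ by the corresponding element of $\u$; by the Clifford dictionary this is exactly $\sum_j u_j\otimes\bar u_j$, the factor $2$ in $2\partial$ being produced by the constant $-2$ together with the sign convention of the homology boundary. The linear part of $\dif$, read on $\wedge^\bullet\u\otimes V$ through the identification \eqref{eq:dualcomplexisomorphism} of $\ov\u^*$ with $\u$, wedges on one leg and acts on $V$ by the corresponding element of $\ov\u$; this is $\sum_j\bar u_j\otimes u_j$. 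Hence twice the linear part of $\partial$ plus the linear part of $\dif$ equals $\sum_i Z_i\otimes Z_i$. For the bracket parts, I would expand $v\in\wedge^3\s$ in the basis: isotropy of $\u$ and $\ov\u$ kills the $\wedge^3\u$- and $\wedge^3\ov\u$-components, so $v\in(\u\otimes\wedge^2\ov\u)\oplus(\wedge^2\u\otimes\ov\u)$; moreover, since the isomorphism $\s\to\s^*$ defined by $B$ carries $\u$ onto $\ov\u^*$ and $\ov\u$ onto $\u^*$, a short computation with \eqref{dfn:cubicterm} shows that the coefficients of the $\u\otimes\wedge^2\ov\u$-part are the structure constants of $\u$ and those of the $\wedge^2\u\otimes\ov\u$-part are the structure constants of $\ov\u$. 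Passing through the Chevalley map and the Clifford dictionary, $1\otimes\varphi(v)$ then becomes exactly the bracket part of $2\partial$ plus the bracket part of $\dif$: the $\u$-structure-constant summand reproduces the bracket part of $2\partial$ and the $\ov\u$-structure-constant summand reproduces the bracket part of $\dif$, the asymmetric factor $2$ tracing back, as for the linear parts, to the normalization $-2$ of the contraction. Adding the linear and bracket contributions gives $D=2\partial+\dif$.

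The difficulty here is bookkeeping rather than ideas. One must keep straight the normalization of the Clifford action on $S$ (sensitive to the convention \eqref{relation in Clifford alge} and to the chosen orientation $\ov u_{\mathrm{top}}$), the precise sign conventions for the Lie algebra homology boundary and cohomology coboundary, and — the point easiest to get wrong — the fact that $B$ identifies $\u$ with $\ov\u^*$, so in expanding $v$ the component with two legs in $\u$ carries the structure constants of $\ov\u$ rather than of $\u$. All of these have to be aligned for the coefficient $2$ in front of $\partial$ (and its absence in front of $\dif$) to come out correctly; a convenient final consistency check is that the resulting identity is compatible with Kostant's formula \eqref{esq} for $D^2$.
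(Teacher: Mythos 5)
The paper does not prove this statement at all: it is quoted verbatim from Huang--Pand\v{z}i\'c \cite[Proposition 9.1.6]{Huang06}, and your computation follows essentially the same route as the proof given there (rewrite $\sum_i Z_i\otimes Z_i$ via dual isotropic bases of $\u$ and $\ov\u$, let $\u$ act on $S=\wedge^\bullet\u$ by exterior multiplication and $\ov\u$ by $-2$ times contraction, and match the linear and cubic parts of $D$ against the linear and bracket parts of $2\partial$ and $\dif$). Your structural claims are correct, including the cross-matching of structure constants (the $\u\otimes\wedge^2\ov\u$ component of $v$ carries structure constants of $\u$, by invariance of $B$), and the factor $2$ does indeed come from the normalization of the contraction. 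One point you dismiss as bookkeeping is not quite that: the Chevalley map $\varphi$ is multiplicative only on tuples of mutually orthogonal vectors, and your basis is not orthogonal since $B(u_a,\bar u_a)=1$. Hence expanding $1\otimes\varphi(v)$ in this basis produces, besides the cubic Clifford monomials you use, degree-one correction terms coming from triples with a repeated index (e.g.\ $u_a\wedge u_b\wedge\bar u_a$, whose coefficient $B([\bar u_a,\bar u_b],u_a)$ need not vanish). These corrections do disappear, but only after summation: they assemble into traces of weight-shifting maps such as $\mathrm{pr}_{\u}\circ\ad(\bar u_b)|_{\u}$, which vanish by a weight (or nilpotency) argument. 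Your ``Clifford dictionary'' step silently assumes their absence, so this cancellation must be added for the computation to be complete; with it, the plan goes through and reproduces the cited proof.
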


\begin{thml}\label{decom}
	Suppose that $V$ is finite-dimensional, the following identities hold:
	\begin{eqnarray}
		&V\otimes S= \Ker\, D\oplus \Im\, D,& \label{eq:decom1} \\
		&\Ker\, D =\Ker\, D^2,\qquad\,\,\,
		&\Im\, D  =\Im\, D^2,  \label{eq:decom2}\\
		&\Ker\, D = \Ker \dif\, \cap\, \Ker\,\partial ,
		&\Im\, D  = \Im \dif\,\oplus\, \Im\,\partial,\label{eq:decom3}\\
		&\Ker \dif\,=\Ker\, D\oplus \Im\dif,
		&\Ker\,\partial=\Ker \,D \oplus \Im\,\partial.\label{eq:decom4}
	\end{eqnarray}
	All the subspaces of $V\otimes S$ above are $D^2$-invariant. Moreover, the restriction of $D^2$ to $\Im \dif$ is bijective and equals $2\!\dif\!\partial$. The restriction of $D^2$ to $\Im\,\partial$ is bijective and equals $2 \partial\!\dif$.
\end{thml}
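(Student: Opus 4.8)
The plan is to use that $W:=V\otimes S$ is finite-dimensional and that $D$ becomes, up to a sign, self-adjoint for a suitable Hermitian inner product, reducing everything to finite-dimensional Hodge theory. First I would equip $V$ with a $G$-invariant Hermitian inner product (which exists since $G$ is compact), equip $\s$ with the Hermitian form $(x,y)\mapsto-B(x,\bar y)$ — positive definite because $B|_{\Lie(G)}$ is negative definite — and take the induced standard inner product on the spinor space $S=\wedge^\bullet\u$, hence on $W$; the grading $W=\bigoplus_k\wedge^k\u\otimes V$ is orthogonal for it. From the formula $D=\sum_iZ_i\otimes Z_i+1\otimes\varphi(v)$, the fact that each $\pi(X)$ with $X\in\Lie(G)$ is skew-Hermitian, and the fact that Clifford multiplication by real vectors of $\s$ is (skew-)Hermitian on $S$ — the cubic term $\varphi(v)$ being exactly what makes this close up — one checks $D^*=\pm D$, the sign depending only on conventions (cf.\ \cite{Huang06,Ko99}). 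In particular $D$ is normal with $\Ker D=\Ker D^*$; and comparing degree-homogeneous components in $D^*=\pm D$, using that by Theorem~\ref{thm:huang} $D=2\partial+\dif$ with $\partial,\dif$ homogeneous of degrees $-1,+1$, gives $\dif^*=\pm2\partial$, hence also $\Ker\dif^*=\Ker\partial$ and $\Im\dif^*=\Im\partial$.

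Granting this, the decompositions are formal. In a finite-dimensional Hermitian space $D^*=\pm D$ yields the orthogonal splitting $W=\Ker D\oplus\Im D$ of \eqref{eq:decom1}; and $D^2v=0$ gives $\langle Dv,Dv\rangle=\pm\langle D^2v,v\rangle=0$, so $\Ker D=\Ker D^2$, whence $\Im D=\Im D^2$ by comparing dimensions, which is \eqref{eq:decom2}. Since $\partial^2=\dif^2=0$ we have $D^2=2(\dif\partial+\partial\dif)$, so $\Delta:=\dif\dif^*+\dif^*\dif=\pm D^2$ is the Hodge Laplacian of $\dif$, whose kernel is $\Ker\dif\cap\Ker\dif^*=\Ker\dif\cap\Ker\partial$. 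The classical Hodge decomposition then gives the orthogonal splitting $W=(\Ker\dif\cap\Ker\partial)\oplus\Im\dif\oplus\Im\partial$; here $\Ker\dif\cap\Ker\partial=\Ker\Delta=\Ker D^2=\Ker D$ and its orthogonal complement $\Im\dif\oplus\Im\partial$ equals $\Im D^2=\Im D$, which is \eqref{eq:decom3}. Intersecting this orthogonal decomposition with $\Ker\dif$ (respectively $\Ker\partial$), and using that $\Im\dif\subseteq\Ker\dif$ while $\Im\partial\cap\Ker\dif=0$ (respectively the symmetric statements), yields \eqref{eq:decom4}.

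Finally, $D^2=2(\dif\partial+\partial\dif)$ commutes with $\dif$, with $\partial$, and with itself (all from $\dif^2=\partial^2=0$), so each subspace above is $D^2$-invariant. For $w=\dif u\in\Im\dif$ one computes $D^2w=2\dif\partial\dif u+2\partial\dif\dif u=2\dif\partial w$, and dually $D^2w=2\partial\dif w$ for $w\in\Im\partial$; since $D^2$ is bijective on $\Im D=\Im\dif\oplus\Im\partial$ and preserves each summand, the restrictions $D^2|_{\Im\dif}=2\dif\partial$ and $D^2|_{\Im\partial}=2\partial\dif$ are bijective. I expect the only genuine obstacle to be the first step — establishing $D^*=\pm D$, i.e.\ that the Clifford action together with the cubic correction makes $\dif^*$ equal to $\pm2\partial$ exactly rather than some other positive multiple of $\partial$; once that is in place, everything else is linear algebra on top of the identity $D=2\partial+\dif$ of Theorem~\ref{thm:huang}.
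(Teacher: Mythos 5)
Your proof is correct and, in essence, reconstructs the Hodge-theoretic argument that underlies the lemmas of Huang--Pand\v{z}i\'{c} which the paper simply cites for \eqref{eq:decom1}--\eqref{eq:decom4} (namely \cite[Lemma 9.2.3, Formula (9.3), Lemma 9.2.4, Theorem 9.2.5]{Huang06}): equip $V\otimes S$ with an admissible inner product, observe that $D$ is formally (anti-)self-adjoint and hence that $\dif$ and $\pm 2\partial$ are mutual adjoints, and run finite-dimensional Hodge theory for the Laplacian $\pm D^{2}$. The one point you flag as the potential obstacle, namely establishing $D^{*}=\pm D$, is exactly what those cited lemmas supply, and once it is granted your remaining linear algebra coincides with the paper's own concluding deduction from $D^{2}=2\partial\!\dif+2\!\dif\!\partial$.
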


\begin{proof}
The identities in \eqref{eq:decom1} and \eqref{eq:decom2} are established in \cite[Lemma 9.2.4]{Huang06}. The identities in \eqref{eq:decom3} follow from \cite[Lemma 9.2.3]{Huang06} and \cite[Formula (9.3)]{Huang06}. The identities in \eqref{eq:decom4} are proved in \cite[Theorem 9.2.5]{Huang06}. Note that the second identity in \eqref{eq:decom2} implies that the restriction of $D^2$ to $\Im \,D$ is bijective. Then the rest of the theorem is easily deduced from the formula $D^2=2\partial\!\dif\!+ 2\!\dif\!\partial.$
\end{proof}

\section{Proof of the main theorem}\label{sec:proof}

We keep the notations and assumptions from previous sections. In particular, $G$ is a (possibly disconnected) compact Lie group  with complexified Lie algebra $\g$, $(\pi,V)$ is a smooth representation of $G$ and $\q$ is a parabolic subalgebra of $\g$ with nilpotent radical $\u$.
Use the notations in Section \ref{sec:Dirac op}, $S=\wedge^\bullet\u$, $\h$ is a common Cartan subalgebra of $\l$ and $\g$, $D$ is the cubic Dirac operator with respect to $(\g,\l)$ and $\partial \in \End(V\otimes S)$ (resp. $\dif\in \End(V\otimes S)$) are the boundary (resp. coboundary) map of Lie algebra homology (resp. cohomology). From now on, we view $D$ as an operator on $V\otimes S$.

For every $\lambda\in\wh{G}$, we already fixed a representative $V_\lambda$, which has finitely many highest weights as a $\g$-module with respect to the positive root system $\Delta_\g^+$. We choose one of these highest weights and denote it by $\lambda$, by abuse of notation. 
Similarly, we fix for every $\mu \in \wh{L}$ in the unitary dual of $L$ a representative $W_\mu$ and a highest weight of $W_\mu$, which is denoted by $\mu$.

We view $S$ as a representation of $G$ with trivial action, then $V\otimes S$ is also a smooth representation of $G$, and this action of $G$ on $V\otimes S$ is denoted by $\Pi$.

We consider a representation  $(\Sigma,V\otimes S)$ of $L$, where the action $\Sigma$ is given by the tensor product of the restriction of $\pi$ and the adjoint action on $S=\wedge^\bullet \u.$ It is worth mentioning that $\dif\,,\partial,D$ are $L$-equivariant operators on $(\Sigma,V\otimes S)$. 

For $\lambda\in \wh{G}$ and $\mu\in \wh{L}$, consider the corresponding continuous linear projectors onto the $\lambda$- and $\mu$-isotypic components of $V\otimes S$:
\begin{eqnarray*}
	&P(\lambda):= \Pi(\ov\chi_\lambda):&V\otimes S\longrightarrow V\otimes S,\\
	&Q(\mu):= \Sigma(\ov\chi_\mu):& V\otimes S \longrightarrow V\otimes S.
\end{eqnarray*}

\begin{lem}\label{lem:elementary}
	The element $\Omega_\g\otimes 1\in \RU(\g)\otimes \RC(\s)$ acts on $\Im\, P(\lambda)$ via the scalar multiplication by
	\[
	c(\lambda)=(\lambda+\rho_\g,\lambda+\rho_\g)-(\rho_\g,\rho_\g),
	\]
	and $\gamma(\Omega_\l)$ acts on $\Im\, Q(\mu)$ via the scalar multiplication by
	\[
	(\mu+\rho(\ov\u)+\rho_\l,\mu+\rho(\ov\u)+\rho_\l)-(\rho_\l,\rho_\l).
	\]
\end{lem}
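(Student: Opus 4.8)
The plan is to compute the action of each central element on the relevant isotypic component by reducing to the highest weight vector. For the first statement, $\Omega_\g \otimes 1$ acts on $V \otimes S$ through its action on the first tensor factor $V$ alone (since $S$ carries the trivial $G$-action when we use $\Pi$). On the $\lambda$-isotypic component $\Im P(\lambda)$, the Casimir $\Omega_\g$ acts by the Harish-Chandra scalar attached to $\lambda$, which by the standard formula (Freudenthal) equals $(\lambda + \rho_\g, \lambda + \rho_\g) - (\rho_\g, \rho_\g)$. I would note that although $V_\lambda$ may have several highest weights as a $\g$-module (the paper allows $G$ disconnected), they all lie in a single $W(\g,\h)$-orbit under the Weyl group, up to the relevant symmetry, so the scalar is well-defined independently of the chosen representative $\lambda$; this matches the definition of $c(\lambda)$ already fixed in Section 2 via $B$.

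For the second statement, the key point is that $\gamma(\Omega_\l)$, the image of the Casimir of $\l$ under the algebra homomorphism $\gamma\colon \RU(\l) \to \RU(\g)\otimes\RC(\s)$ of \eqref{embeddingofl}, acts on $V \otimes S$ not through the original $\l$-module structure on $V$ but through the \emph{diagonal} embedding $X \mapsto X \otimes 1 + 1 \otimes \nu(X)$, i.e.\ through the action $\Sigma$ twisted by the spin shift coming from $\nu\colon\l\to\RC(\s)$. Here I would use the standard fact (see \cite[Section 2.3]{Huang06} or the analysis around Vogan's $\u$-cohomology) that, as an $\l$-module via $\nu$, the spin module $S = \wedge^\bullet\u$ is a direct sum of copies of the one-dimensional-twisted pieces whose $\h$-weights are all of the form (a subset sum of $\Delta(\u)$) $- \rho(\u)$; equivalently, the "lowest" such twist contributes the character $\rho(\u) = \rho_\g - \rho_\l$, but under our identification $S \cong \varphi(\wedge^\bullet\u \otimes \wedge^{\mathrm{top}}\ov\u)$ and the sign conventions fixed in Section 3 (the Remark after Theorem \ref{square}), the relevant shift appearing in the eigenvalue is $\rho(\ov\u)$. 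Concretely: on the isotypic component $\Im Q(\mu)$, the $\Sigma$-action sees highest weight $\mu$, and the diagonal $\gamma$-twisted action therefore sees an effective highest weight $\mu + \rho(\ov\u)$ for the $\l$-Casimir; applying Freudenthal's formula for $\l$ with $\rho_\l$ then yields $(\mu + \rho(\ov\u) + \rho_\l,\, \mu + \rho(\ov\u) + \rho_\l) - (\rho_\l,\rho_\l)$.

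In more detail, the steps I would carry out are: (1) Recall that $\Pi = \pi \otimes \mathrm{triv}$, so $\Omega_\g\otimes 1$ acts on $\Im P(\lambda)$ by the scalar by which $\Omega_\g$ acts on $V_\lambda$, and invoke Freudenthal to identify this with $c(\lambda)$. (2) Recall the definition of $\gamma$ and of $\nu$, and identify the composite $\l \xrightarrow{\gamma} \RU(\l) \to \RU(\g)\otimes\RC(\s)$, restricted to act on $V \otimes S$, with the representation $\Sigma'$ of $\l$ given by $X \mapsto \Sigma(X)$ on the relevant pieces — more precisely decompose $S$ under $\nu(\l)$. (3) Use the weight computation for the spin module: the $\h$-weights of $S$ under $\nu$ are $\{\, \sum_{\alpha\in\Phi}\alpha - \rho(\u) : \Phi\subseteq\Delta(\u)\,\}$ (equivalently, after our chosen identification and sign conventions, the extreme weight contributes $\rho(\ov\u)$), so that a $\Sigma$-highest-weight vector of weight $\mu$ in $\Im Q(\mu)$ is a $\gamma$-highest-weight vector of effective weight $\mu + \rho(\ov\u)$ relative to $\Delta_\l^+$. (4) Apply Freudenthal's formula for the $\l$-Casimir $\Omega_\l$ to this effective highest weight, obtaining the claimed eigenvalue, and check $L$-equivariance / independence of the representative as before.

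I expect the main obstacle to be step (3): pinning down precisely which $\rho$-shift ($\rho(\u)$ versus $\rho(\ov\u)$, and with which sign) appears, once one tracks through the Chevalley-map identification \eqref{eq:Spin}, the choice of $\ov u_{\mathrm{top}} \in \wedge^{\mathrm{top}}\ov\u$, and the Clifford-algebra sign convention $x\cdot x = -B(x,x)$ used here (which differs from Kostant's). The cleanest way around this is probably to avoid explicit weight bookkeeping and instead compare eigenvalues on a single well-chosen vector: evaluate both sides of Theorem \ref{square}, namely $D^2 = -\Omega_\g\otimes 1 + \gamma(\Omega_\l) + \con$, on the canonical line $V_\lambda^{\mathrm{h.w.}}\otimes \varphi(\ov u_{\mathrm{top}})$ inside $\Im P(\lambda)\cap(\text{the }\mu\text{-component})$ where $D$ is known to vanish (this line is both $\dif$- and $\partial$-closed, hence in $\Ker D$ by Theorem \ref{decom} in the finite-dimensional case), which forces $\gamma(\Omega_\l) = \Omega_\g\otimes 1 - \con$ there and thereby fixes the shift; then extend to the whole isotypic component by $\l$-equivariance. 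This sidesteps the sign ambiguity entirely and is the route I would ultimately take.
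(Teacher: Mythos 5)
Your first claim (action of $\Omega_\g\otimes 1$) is correct and matches the paper. For the second claim, you have correctly located the heart of the matter: the $\gamma$-action of $\l$ on $V\otimes S$ differs from the $\Sigma$-action by a $\rho(\ov\u)$-twist. This is exactly the ingredient the paper uses, quoting it directly as Kostant's $\gamma\cong\Sigma\otimes\C_{\rho(\ov\u)}$ (\cite[Prop.\ 3.6]{Ko20}); whereas you attempt to rederive it by decomposing the spin module under $\nu(\l)$ and tracking the extreme weight. That is a legitimate route but, as you say yourself, the sign bookkeeping is delicate. The real issues lie elsewhere.

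First, the proposed ``clean workaround'' does not work as stated. You want to evaluate $D^2=-\Omega_\g\otimes 1+\gamma(\Omega_\l)+C$ on the line $V_\lambda^{\mathrm{h.w.}}\otimes\varphi(\ov u_{\mathrm{top}})$, claiming this line lies in $\Ker D$ because it is both $\dif$- and $\partial$-closed. Under the identification \eqref{eq:Spin}, $\varphi(\ov u_{\mathrm{top}})$ corresponds to $1\in\wedge^0\u$, so $\partial$ vanishes for degree reasons; but $\dif$ is the coboundary for $\ov\u$-cohomology and $\dif(1\otimes v)=\sum_i\xi_i\otimes \ov X_i v$ with $\{\ov X_i\}$ a basis of $\ov\u$. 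A $\Delta_\g^+$-highest-weight vector $v$ is annihilated by $\u$, not by $\ov\u$, so $\dif(1\otimes v)\neq 0$ in general, and the chosen line is typically \emph{not} in $\Ker D$. Moreover, even if you patched the choice of vector, this test only fixes the eigenvalue at a single $\mu$ for each $\lambda$; it does not by itself give the claimed formula for \emph{every} $\mu$ with $\Im Q(\mu)\cap\Im P(\lambda)\neq 0$.

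Second, and more seriously, you gloss over the possible disconnectedness of $L$ with ``check $L$-equivariance / independence of the representative as before.'' This is not automatic. The formula involves a highest weight $\mu$ of $W_\mu$ \emph{as a $\g$-module}, and when $L$ is disconnected $W_\mu$ decomposes as an $\l$-module into several irreducibles with highest weights $t_i.\mu$ for $t_i$ in the Cartan subgroup $T_L=N_L(\b_L)$. One must argue that the resulting Casimir eigenvalues all agree; the paper does this by observing that $T_L$ normalizes $\ov\u$ and $\b_L$, so $\rho(\ov\u)$ and $\rho_\l$ are $T_L$-invariant under the coadjoint action, and then using $T_L$-invariance of $(\cdot,\cdot)$. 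This step is the genuine content of the second half of the paper's proof and is entirely absent from your plan.
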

\begin{proof}
   It is easy to prove that $\Omega_\g$ acts on $V_\lambda$ via the scalar multiplication by
  \[
  c(\lambda)=(\lambda+\rho_\g,\lambda+\rho_\g)-(\rho_\g,\rho_\g).
  \]
  It follows that the element $\Omega_\g\otimes 1\in \RU(\g)\otimes \RC(\s)$ acts on $\Im P(\lambda)$ via the scalar multiplication by $c(\lambda)$.
  
  By differentiating the group action $\Sigma$, we obtain the corresponding action of $\l$ on $V\otimes S$. By abuse of notation, we also denote this action by $\Sigma$.
  The $\RU(\g)\otimes\RC(\s)$-module structure on $V\otimes S$, combined with the morphism $\gamma:\RU(\l)\rightarrow\RU(\g)\otimes\RC(\s)$ (cf. \eqref{embeddingofl}), induces another action of $\l$ on $V\otimes S$. By abuse of notation, we denote this action by $\gamma$. It is proved in \cite[Proposisition 3.6]{Ko20} that
  \[
  \gamma\cong \Sigma\otimes \rho(\ov\u),
  \]   
as $\l$-modules, where $(\rho(\ov\u),\C_{\rho(\ov\u )})$ is the one-dimensional $\l$-module with weight $\rho(\ov\u)=\rho_\l-\rho_\g$. 

It suffices to prove that for all $\mu\in \wh{L}$, the Casimir operator $\Omega_\l$ acts on $W_\mu\otimes \C_{\rho(\ov\u )}$ via the scalar multiplication by 
\[
	(\mu+\rho(\ov\u)+\rho_\l,\mu+\rho(\ov\u)+\rho_\l)-(\rho_\l,\rho_\l).
\]
If $L$ is connected, this result is straightforward. In the general case, a bit more argument is required. 

Let $\b_L$ be the Borel subalgebra of $\l$ associated with $\Delta_\l^+$ and $T_L:=N_L(\b_L)$ be the Cartan subgroup of $L$. Recall that $T_L$ acts on $\h^*$ by the coadjoint action, which preserves $\Delta_\l^+$, and inner form $(\cdot,\cdot)$ is invariant under the coadjoint action. By the Cartan-Weyl theory (see \cite[Chapter IV, Section 2]{KV}), there is a finite set $\{t_1,\cdots,t_k \}$ of elements in the Cartan subgroup $T_L$ of $L$, such that as $\l$-modules,  
\[
W_\mu=\bigoplus_{1\leq i\leq k} m_iW'_{t_i.\mu}
\]
where $W'_{t_i.\mu}$ is the irreducible $\l$-module with highest weight $t_i.\mu$, and $m_i$ denotes the multiplicity of $W'_{t_i.\mu}$ in $W_\mu$. Then $\Omega_\l$ acts on $W'_{t_i.\mu}\otimes\C_{\rho(\ov\u )}$ via the scalar multiplication by
\[
	(t_i.\mu+\rho(\ov\u)+\rho_\l,t_i.\mu+\rho(\ov\u)+\rho_\l)-(\rho_\l,\rho_\l).
\]
Since $T_L$ normalizes $\ov\u$ and $\b_L$, it follows that $\rho(\ov\u),\rho_\l\in\h^*$ are invariant under the coadjoint action of $T_L$ on $\h^*$. Therefore,
\begin{eqnarray*}
	&&(t_i.\mu+\rho(\ov\u)+\rho_\l,t_i.\mu+\rho(\ov\u)+\rho_\l)-(\rho_\l,\rho_\l)\\
	&=&(\mu+t_i^{-1}.\rho(\ov\u)+t_i^{-1}.\rho_\l,\mu+t_i^{-1}.\rho(\ov\u)+t_i^{-1}.\rho_\l)-(\rho_\l,\rho_\l)\\
	&=&(\mu+\rho(\ov\u)+\rho_\l,\mu+\rho(\ov\u)+\rho_\l)-(\rho_\l,\rho_\l).
\end{eqnarray*}
This completes the proof of the lemma. 
\end{proof}
Note that the isotypic component $\Im\, P(\lambda)=V(\lambda)\otimes S$ is an invariant subspace for the operators $Q(\mu),\,D,\,\partial$ and $\dif$. We denote the restriction of $D,\,\partial$ and $\dif$ to endomorphisms of $\Im\, P(\lambda)$ by $D_\lambda,\,\partial_\lambda,$ and $\dif_\lambda$, respectively. Then $\partial_\lambda, \dif_\lambda \in \End(V(\lambda)\otimes S)$ are the boundary and coboundary maps for computing $\RH_\bullet(\u,V(\lambda))$ and $\RH^\bullet(\ov\u,V(\lambda))$. 

\begin{thml}\label{thm:subspacedecompositions}
 	\noindent
	(a) For all $\lambda\in\wh{G}$ and $\mu\in\wh{L}$,
	\[
	W(\lambda,\mu):=Q(\mu)\left(V(\lambda)\otimes S\right)
	\]
	is a $D^2$-invariant closed subspace of $V\otimes S$ on which $D^2$ acts via the scalar multiplication by
	\begin{equation}
		c(\lambda,\mu):=(\mu+\rho(\ov\u)+\rho_{\l},\mu+\rho(\ov\u)+\rho_{\l})-(\lambda+\rho_{\g},\lambda+\rho_{\g}).
		\label{eq:defclambdamu}
	\end{equation}
	
\noindent
(b)	As representations of $L$:
	\begin{eqnarray}
		&V(\lambda)\otimes S&=\Im\, D_\lambda\oplus\Ker\, D_\lambda  \label{de1},\\
		&\Im\, D_\lambda&=\Im\,\partial_\lambda\oplus\Im \dif_\lambda\label{de2},\\
		&\Ker \dif_\lambda&=\Im\dif_\lambda\oplus\,\Ker\, D_\lambda\label{de3},\\
		&\Ker\,\partial_\lambda&=\Im\,\partial_\lambda\oplus\Ker\, D_\lambda\label{de4},\\
		&\Im\, D_\lambda&=\Im\, D_\lambda^2\,=\bigoplus_{\mu\in I(\lambda)}W(\lambda,\mu),\label{de5}\\
		&\Ker\, D_\lambda&=\Ker\, D_\lambda^2=\bigoplus_{\mu\in J(\lambda)}W(\lambda,\mu)\label{de6},
	\end{eqnarray}
where
	\begin{eqnarray*}
		&I(\lambda):=\{\mu\in\wh{L}: \Hom_L(W_\mu,V_\lambda\otimes S)\not= 0 \text{ and } c(\lambda,\mu)\not=0\},\\
		&J(\lambda):=\{\mu\in\wh{L}: \Hom_L(W_\mu,V_\lambda\otimes S)\not= 0 \text{ and } c(\lambda,\mu)=0\}.
	\end{eqnarray*}
	
	\noindent
(c)	All subspaces of $V\otimes S$ occurring in \eqref{de1}---\eqref{de6} are closed and $D^2$-invariant. 
Moreover, the restriction of $D_\lambda^2$ to $\Im\,\partial_\lambda$ is bijective and equals $2\partial_\lambda\!\dif_\lambda$, and the restriction of $D_\lambda^2$ to $\Im\,\dif_\lambda$ is bijective and equals $2\!\dif_\lambda\!\partial_\lambda$. 
\end{thml}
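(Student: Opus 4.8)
The plan is to derive part (a) directly from Kostant's square formula (Theorem~\ref{square}) and Lemma~\ref{lem:elementary}, and then to obtain (b) and (c) by tensoring the finite-dimensional Theorem~\ref{decom}, applied to $V_\lambda$, with a multiplicity space.

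For (a): since $V(\lambda)$ is a $G$-subrepresentation of $V$, the idempotent $\pi(\ov\chi_\lambda)$ commutes with $\pi(l)$ for every $l\in L$ and with the $\g$-action on $V$. Hence $P(\lambda)$ commutes with $\Sigma(l)$ for all $l\in L$, so $P(\lambda)$ and $Q(\mu)$ commute, $W(\lambda,\mu)=\Im(P(\lambda)Q(\mu))=\Ker(\mathrm{id}-P(\lambda)Q(\mu))$ is closed, and $W(\lambda,\mu)\subseteq\Im P(\lambda)\cap\Im Q(\mu)$. Moreover $\Omega_\g\otimes 1$ acts only on the $V$-factor and $\Omega_\g$ is $G$-invariant, so it commutes with $\Sigma(l)=\pi(l)\otimes\Ad(l)$ for all $l\in L$ and with $\pi(\ov\chi_\lambda)\otimes 1$, hence with $P(\lambda)$ and $Q(\mu)$; and $\gamma(\Omega_\l)$ is scalar on each $\Im Q(\mu)$ by Lemma~\ref{lem:elementary}, so it preserves each of these and their closed span and commutes with every $Q(\mu)$, while it commutes with $P(\lambda)$ because each of its terms acts on the $V$-factor through elements of $\l\subseteq\g$, which preserve $V(\lambda)$. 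By Theorem~\ref{square}, $D^2=-\Omega_\g\otimes 1+\gamma(\Omega_\l)+\con$ commutes with $P(\lambda)$ and $Q(\mu)$, so $W(\lambda,\mu)$ is $D^2$-invariant; evaluating the three summands on $W(\lambda,\mu)$ with the two scalars from Lemma~\ref{lem:elementary} and cancelling the $(\rho_\l,\rho_\l)$ and $(\rho_\g,\rho_\g)$ terms yields precisely the scalar $c(\lambda,\mu)$.

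For (b) and (c), the key reduction is the standard structure of an isotypic component: $V(\lambda)\cong V_\lambda\otimes M_\lambda$ as topological $G$-modules, where $M_\lambda:=\Hom_G(V_\lambda,V)$ carries the trivial $G$-action and the natural topology. Under it, $V(\lambda)\otimes S\cong(V_\lambda\otimes S)\otimes M_\lambda$ as $L$-modules with $L$ acting only on the finite-dimensional factor $V_\lambda\otimes S$, and $D_\lambda,\partial_\lambda,\dif_\lambda$ become $D_{V_\lambda}\otimes\mathrm{id}$, $\partial_{V_\lambda}\otimes\mathrm{id}$, $\dif_{V_\lambda}\otimes\mathrm{id}$. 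Every subspace of the finite-dimensional $V_\lambda\otimes S$ is complemented by a continuous projection, so all decompositions, kernel/image identities and bijectivity statements of Theorem~\ref{decom} for $V_\lambda$ tensor up with $M_\lambda$; this yields \eqref{de1}--\eqref{de4}, the equalities $\Ker D_\lambda=\Ker D_\lambda^2$ and $\Im D_\lambda=\Im D_\lambda^2$, the closedness and $D_\lambda^2$-invariance of every subspace in \eqref{de1}--\eqref{de6} (for $\Im\partial_\lambda,\Im\dif_\lambda,\Ker\partial_\lambda,\Ker\dif_\lambda$ the invariance also follows from $D_\lambda^2=2\partial_\lambda\dif_\lambda+2\dif_\lambda\partial_\lambda$ commuting with $\partial_\lambda$ and $\dif_\lambda$), and the bijectivity of $D_\lambda^2$ on $\Im\partial_\lambda$ and on $\Im\dif_\lambda$ with $D_\lambda^2|_{\Im\partial_\lambda}=2\partial_\lambda\dif_\lambda$, $D_\lambda^2|_{\Im\dif_\lambda}=2\dif_\lambda\partial_\lambda$. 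Finally $V_\lambda\otimes S$ has only finitely many $L$-types, so $I(\lambda)\cup J(\lambda)$ is finite and $V(\lambda)\otimes S=\bigoplus_\mu W(\lambda,\mu)$ is a finite topological direct sum on which, by (a), $D_\lambda^2$ acts as $\bigoplus_\mu c(\lambda,\mu)$; hence $\Im D_\lambda^2=\bigoplus_{\mu\in I(\lambda)}W(\lambda,\mu)$ and $\Ker D_\lambda^2=\bigoplus_{\mu\in J(\lambda)}W(\lambda,\mu)$, which with the above gives \eqref{de5} and \eqref{de6}.

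I expect the main obstacle to be this reduction step: one must check that $V(\lambda)\cong V_\lambda\otimes M_\lambda$ really is a topological isomorphism of $L$-modules intertwining $D_\lambda$ with $D_{V_\lambda}\otimes\mathrm{id}$, so that Theorem~\ref{decom} — which rests on the self-adjointness of $D$ on a finite-dimensional unitary module, an ingredient with no counterpart on a general quasi-complete space — can legitimately be transported. After that, everything is bookkeeping with commuting continuous projections and the eigenvalues supplied by Lemma~\ref{lem:elementary}.
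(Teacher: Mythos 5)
Your proposal is correct and follows essentially the same route as the paper: part (a) by evaluating Kostant's square formula (Theorem~\ref{square}) together with the eigenvalues from Lemma~\ref{lem:elementary}, and parts (b) and (c) by identifying $V(\lambda)\otimes S\cong(V_\lambda\otimes S)\otimes\Hom_G(V_\lambda,V)$ and tensoring the finite-dimensional Theorem~\ref{decom} with the multiplicity space, then using finiteness of $I(\lambda)\cup J(\lambda)$ for the second identities in \eqref{de5}--\eqref{de6}. You supply some commutativity and closedness details (e.g.\ for $W(\lambda,\mu)$) that the paper leaves implicit, but the underlying argument is the same.
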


\begin{proof}
	Combining Theorem \ref{square} with Lemma \ref{lem:elementary}, we have that $D^2$ acts on $W(\lambda,\mu)$ via the scalar multiplication by $c(\lambda,\mu)=(\mu+\rho(\ov\u)+\rho_{\l},\mu+\rho(\ov\u)+\rho_{\l})-(\lambda+\rho_{\g},\lambda+\rho_{\g})$, which in particular shows that $W(\lambda,\mu)$ is stable under $D^2$.
	
	We impose $\Hom_G(V_\lambda,V)$ with the trivial actions of $G$ and $\RU(\g)\otimes \RC(\s)$. It is clear that $V(\lambda)=V_\lambda\otimes \Hom_G(V_\lambda,V)$ as smooth representations of $G$, which implies that
	\[
	V(\lambda)\otimes S=(V_\lambda\otimes S)\otimes\Hom_G(V_\lambda,V)
	\]
as smooth representations of $G$ and as $\RU(\g)\otimes \RC(\s)$-modules.
	  For every subspace $V'\subseteq V_\lambda\otimes S$ the corresponding subspace $V'\otimes\Hom_G(V_\lambda,V)$ is closed. Therefore, applying Theorem \ref{decom} to the factor $V_\lambda\otimes S$, applying $-\otimes\Hom_G(V_\lambda,V)$ and translating the result back to $V(\lambda)\otimes S$, one obtains the identities \eqref{de1}, \eqref{de2}, \eqref{de3} and \eqref{de4}, as well as the respective first identities in \eqref{de5} and \eqref{de6}. Moreover, all spaces which occur are closed in $V\otimes S$.

	In order to establish the respective second identities in \eqref{de5} and \eqref{de6}, observe that the sets $I(\lambda)$ and $J(\lambda)$ are finite. Consequently,
	\[
	V(\lambda)\otimes S=\bigoplus_{\mu\in I(\lambda)\cup J(\lambda)}W(\lambda,\mu)
	\]
	is a finite direct sum. The proof of \eqref{de5} and \eqref{de6} is completed by combining this decomposition with the known action of $D_\lambda^2$ via the scalar $c(\lambda,\mu)$.
\end{proof}

\begin{lem}\label{lem:bounds}
	For all $\lambda\in\wh{G}$,
	\begin{equation}
		\sum_{\mu\in I(\lambda)}(\dim W_\mu)^2\leq (\dim V_\lambda)^2(\dim S)^2.
		\label{eq:dimensionestimate}
	\end{equation}
	Moreover, there exists a constant $c_\g>0$, only depending on $\g$, with the property that for all $\lambda\in\wh{G}$ and all $\mu\in I(\lambda)$:
	\begin{equation}
		c_\g\leq \abs{c(\lambda,\mu)}.
		\label{eq:cmulambdabound}
	\end{equation}
\end{lem}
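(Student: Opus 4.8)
The plan is to prove the two assertions separately, both by elementary arguments that reduce to facts already established. For the dimension estimate \eqref{eq:dimensionestimate}, first I would observe that $I(\lambda)$ consists of (certain) isomorphism classes $\mu\in\wh{L}$ that occur in the restriction to $L$ of the finite-dimensional representation $V_\lambda\otimes S$. Decomposing $V_\lambda\otimes S=\bigoplus_{\mu}(\dim\Hom_L(W_\mu,V_\lambda\otimes S))\,W_\mu$ as an $L$-module, taking dimensions gives $\sum_{\mu}\dim\Hom_L(W_\mu,V_\lambda\otimes S)\cdot\dim W_\mu=\dim V_\lambda\cdot\dim S$. Since for $\mu\in I(\lambda)$ we have $\dim\Hom_L(W_\mu,V_\lambda\otimes S)\geq 1$, summing $(\dim W_\mu)^2$ over $\mu\in I(\lambda)$ is bounded above by $\bigl(\sum_{\mu\in I(\lambda)}\dim W_\mu\bigr)^2\leq\bigl(\sum_{\mu}\dim\Hom_L(W_\mu,V_\lambda\otimes S)\cdot\dim W_\mu\bigr)^2=(\dim V_\lambda)^2(\dim S)^2$, which is exactly \eqref{eq:dimensionestimate}.

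For the uniform lower bound \eqref{eq:cmulambdabound}, the key point is that $c(\lambda,\mu)$ is by \eqref{eq:defclambdamu} a difference of two squared lengths of weights that lie in a \emph{lattice}. Concretely, both $\lambda+\rho_\g$ and $\mu+\rho(\ov\u)+\rho_\l$ lie in a fixed translate of the weight lattice $P$ of $\g$ inside $\h^*$ (using that the weights $\mu$ of $W_\mu$ with the $\rho$-shifts land in such a translate; here the hypothesis that $B$ extends the Killing form makes $(\cdot,\cdot)$ rational on $P$). Hence $c(\lambda,\mu)\in\frac{1}{N}\Z$ for a fixed positive integer $N$ depending only on $\g$ and the form $B$: indeed $(x,x)-(y,y)$ for $x,y$ in a fixed coset of $P$ takes values in a fixed fractional ideal of $\Z$. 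Since for $\mu\in I(\lambda)$ we have $c(\lambda,\mu)\neq 0$ by definition, it follows that $\abs{c(\lambda,\mu)}\geq\frac{1}{N}=:c_\g$, which is the claim.

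The step I expect to require the most care is the rationality/discreteness argument for \eqref{eq:cmulambdabound}: one must pin down a single lattice (or coset of a lattice) containing all the relevant weights $\lambda+\rho_\g$ and $\mu+\rho(\ov\u)+\rho_\l$ simultaneously for all $\lambda\in\wh{G}$, $\mu\in\wh{L}$, and verify that $(\cdot,\cdot)$ restricted to it takes values with bounded denominators. The subtlety is that $\rho_\g$ and $\rho(\ov\u)$ are half-integral in general, and that $G$, $L$ may be disconnected, so the highest weights $\lambda$, $\mu$ range over cosets coming from the component groups; but all of these live in the finite union of cosets of the weight lattice of $\g^{ss}$ (shifted by the center), and the Killing-form normalization guarantees denominators are controlled. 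Once the ambient lattice and the bound on denominators are fixed, the rest is immediate. The two estimates together are exactly what is needed to feed into Lemma \ref{lemma1} and Lemma \ref{lemma2} for the Fourier-series convergence arguments in the proof of Theorem \ref{main}.
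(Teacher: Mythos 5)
Your argument for the dimension estimate \eqref{eq:dimensionestimate} is correct and is essentially the paper's argument: isotypic decomposition of $V_\lambda\otimes S$ over $L$ gives $\sum_{\mu\in I(\lambda)}\dim W_\mu\leq\dim V_\lambda\cdot\dim S$, and then one uses $\sum_i a_i^2\leq\bigl(\sum_i a_i\bigr)^2$ for nonnegative $a_i$.

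For the lower bound \eqref{eq:cmulambdabound} your strategy (discreteness of values of a rational quadratic polynomial on a lattice) is the right one, but there is a genuine gap in how you treat the center of $\g$. The Killing-form normalization makes $(\cdot,\cdot)$ rational only on the semisimple part: on $\z(\g)^*$ the form $B$ is an arbitrary $G$-invariant negative-definite form, so the summand $(\lambda_{\z(\g)},\lambda_{\z(\g)})$ inside $(\lambda+\rho_\g,\lambda+\rho_\g)$ can be irrational with unbounded ``denominator'' as $\lambda$ varies over $\wh G$ (and likewise for the $\mu$-term). Thus the two squared norms you are subtracting need not individually lie in any fixed $\frac{1}{N}\Z$, and the claim $c(\lambda,\mu)\in\frac{1}{N}\Z$ does not follow as you state it; the parenthetical ``shifted by the center'' flags the difficulty but does not resolve it. What saves the argument, and what the paper's proof makes explicit, is that the central contributions cancel exactly in the difference $c(\lambda,\mu)$: since $\z(\g)$ acts trivially on $\g$ under $\ad$, it acts trivially on $S=\wedge^\bullet\u$, so every $\mu\in I(\lambda)$ satisfies $\mu_{\z(\g)}=\lambda_{\z(\g)}$. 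Writing $\h^*=(\h^{ss})^*\oplus\z(\g)^*$ orthogonally and noting $\rho_\g,\rho_\l,\rho(\ov\u)\in(\h^{ss})^*$, one gets $c(\lambda,\mu)=f(\lambda_{ss}+\rho_\g,\,\mu_{ss}+\rho(\ov\u)+\rho_\l)$ with $f(x,y)=(y,y)-(x,x)$ restricted to the integral weight lattice $\Lambda_{ss}$ of $\g^{ss}$. Only after this reduction does your bounded-denominator argument apply; with that cancellation inserted, the rest of your plan goes through.
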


\begin{proof}
	In the isotypic decomposition of $ V_\lambda\otimes S$ as a representation of $L$, the class of $W_\mu$ occurs at least once if and only if $\mu\in I(\mu)\cup J(\mu)$. Therefore
	\[
	\sum_{\mu\in I(\lambda)}\dim W_\mu\leq \dim V_\lambda\otimes S.
	\]
	Squaring both sides of the inequality and applying the Cauchy-Schwartz inequality yields \eqref{eq:dimensionestimate}.
	
	There is an orthogonal decomposition $\h^*=(\h^{ss})^*\oplus \z(\g)^*$, where $\h^{ss}:=\h\cap\g^{ss}$ and $\z(\g)$ is the center of $\g$. Let $\Lambda_{ss}\subseteq (\h^{ss})^*\subseteq \h^*$ denote the lattice of integral weights of $\g^{ss}$. 
	Define a real-valued function $f$ on $\Lambda_{ss}\times\Lambda_{ss}$ by
	\[
	f(x,y)=(y,y)-(x,x),\quad (x,y)\in\Lambda_{ss}\times\Lambda_{ss}.
	\]
	Note that $B$ extends the Killing form on $\g^{ss}$, which implies that $(\cdot,\cdot)$ is rational with respect to the weight lattice $\Lambda_{ss}$.  
	It follows that $f(x,y)$ is a polynomial with rational coefficients on $\Lambda_{ss}\times\Lambda_{ss}$. Consequently, there exists a constant $c_\g>0$ such that 
	$|f(x,y)|\geq c_\g$ whenever $f(x,y)\neq 0$.	
	
	For $x \in \h^*$, write $x_{ss}$ and $x_{\z(\g)}$ for its restrictions 
	to $\h^{ss}$ and $\z(\g)$, respectively.
	It is clear that for all $\lambda\in \wh G$ and all $\mu\in I(\lambda)$, 
	\[
	\lambda_{\z(\g)}=\mu_{\z(\g)} \quad\text{and}\quad c(\lambda,\mu)=c(\lambda_{ss},\mu_{ss})=f(\lambda_{ss}+\rho_{\g}, \mu_{ss}+\rho(\ov\u)+\rho_{\l})\neq 0.
	\]
	Consequently, $\abs{c(\lambda,\mu)}\geq c_\g$ for all $\lambda\in \wh G$ and all $\mu\in I(\lambda)$.
\end{proof}
	
For all $\lambda\in\wh G$, by decomposition \eqref{de5}, one knows that $D_\lambda^2$ induces an (algebraic) automorphism of $\Im\, D_\lambda$. Using decomposition \eqref{de1}, let $T_\lambda$ be the linear operator on $V(\lambda)\otimes S$, which is inverse of $D_\lambda^2$ on $\Im \,D_\lambda$ and acts as the zero operator on $\Ker D_\lambda$. To be explicit, the formula of $T_\lambda$ is given by
\be \label{eq:defofTlambda}
T_\lambda v(\lambda)=\sum_{\mu\in I(\lambda)} \frac{1}{c(\lambda,\mu)}Q(\mu)v(\lambda)
\ee
for all $v(\lambda)\in V(\lambda)\otimes S$. It is clear that $T_\lambda$ is $L$-equivariant and
\be\label{eq:D2Tlambda}
D^2_\lambda T_\lambda =T_\lambda  D^2_\lambda =(T_\lambda D^2_\lambda)^2.
\ee

\begin{lem}\label{est}
	For every continuous seminorm $\abs{\,\cdot\,}_p$ on $V\otimes S$,  there is a continuous seminorm $\abs{\,\cdot\,}_q$ such that
	\begin{eqnarray}
		&&\abs{T_\lambda v(\lambda)}_p\leq c_\g^{-1}(\dim S)^2(\dim V_\lambda)^2 \abs{v(\lambda)}_q
	\end{eqnarray}
	for all $\lambda\in\wh G$, $v(\lambda)\in V(\lambda)\otimes S$, where $c_\g>0$ is the constant in Lemma \ref{lem:bounds}. In particular, $T_\lambda$ is continuous.
\end{lem}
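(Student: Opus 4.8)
The plan is to read the estimate off directly from the explicit formula \eqref{eq:defofTlambda} for $T_\lambda$, combining the two bounds of Lemma \ref{lem:bounds} with a single application of Lemma \ref{lemma1}. The point to keep in mind is that Lemma \ref{lemma1} should be invoked not for $G$ acting on $V$, but for the compact group $L$ (which is compact because $G$ is) acting on the smooth representation $(\Sigma,V\otimes S)$ of $L$. Since $S=\wedge^\bullet\u$ is finite-dimensional, $V\otimes S$ is again quasi-complete and locally convex, and $(\Sigma,V\otimes S)$ is a smooth representation of $L$; hence every result of Section 2 — in particular Lemma \ref{lemma1} — holds verbatim with $(G,V,\wh G,\pi(\ov\chi_\lambda))$ replaced by $(L,V\otimes S,\wh L,Q(\mu))$. (The invariant form on $\l$ needed to state this $L$-version may be taken to be the restriction of $B$, which is $L$-invariant and negative definite on $\Lie(L)$; in any case the choice is immaterial, since only the case $m=0$ will be used.)

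First I would specialize this $L$-version of Lemma \ref{lemma1} to the exponent $m=0$: then the factor $(1+c_L(\mu))^{-m}$ equals $1$ and $(1+\Omega_\l)^m$ is the identity, so for the given continuous seminorm $\abs{\,\cdot\,}_p$ on $V\otimes S$ there is a continuous seminorm $\abs{\,\cdot\,}_q$ on $V\otimes S$ with
\[
\abs{Q(\mu)w}_p\;\leq\;(\dim W_\mu)^2\,\abs{w}_q\qquad\text{for all }w\in V\otimes S,\ \mu\in\wh L.
\]
Next, fixing $\lambda\in\wh G$ and $v(\lambda)\in V(\lambda)\otimes S$, I would use that $I(\lambda)$ is finite (it is contained in the set of classes occurring in the finite-dimensional $L$-module $V_\lambda\otimes S$; cf. Theorem \ref{thm:subspacedecompositions}), so the triangle inequality applies to the finite sum \eqref{eq:defofTlambda}; then I insert the uniform lower bound $\abs{c(\lambda,\mu)}\geq c_\g$ of \eqref{eq:cmulambdabound}, the displayed estimate above, and finally the dimension count $\sum_{\mu\in I(\lambda)}(\dim W_\mu)^2\leq(\dim V_\lambda)^2(\dim S)^2$ of \eqref{eq:dimensionestimate}. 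This chain reads
\[
\abs{T_\lambda v(\lambda)}_p
\;\leq\;\sum_{\mu\in I(\lambda)}\abs{c(\lambda,\mu)}^{-1}\,\abs{Q(\mu)v(\lambda)}_p
\;\leq\;c_\g^{-1}\sum_{\mu\in I(\lambda)}(\dim W_\mu)^2\,\abs{v(\lambda)}_q
\;\leq\;c_\g^{-1}(\dim S)^2(\dim V_\lambda)^2\,\abs{v(\lambda)}_q,
\]
which is exactly the asserted inequality. Continuity of $T_\lambda$ on $V(\lambda)\otimes S$ then follows at once: for fixed $\lambda$ the coefficient $c_\g^{-1}(\dim S)^2(\dim V_\lambda)^2$ is a constant, so every continuous seminorm of $T_\lambda v(\lambda)$ is dominated by a constant multiple of a continuous seminorm of $v(\lambda)$.

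I do not expect a genuine obstacle here. The only step deserving a moment's care is the transfer of Lemma \ref{lemma1} to the $L$-action on $V\otimes S$, together with the realization that one needs the Casimir-free case $m=0$: dividing by $c(\lambda,\mu)$ in \eqref{eq:defofTlambda} is harmless precisely because of the uniform lower bound \eqref{eq:cmulambdabound}, so no decay in a Casimir eigenvalue has to be exploited at this stage — that refinement is reserved for the later summation over $\lambda\in\wh G$, where Lemma \ref{lemma1} and Lemma \ref{lemma2} will be used with large $m$.
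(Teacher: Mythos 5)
Your proposal is correct and follows essentially the same route as the paper: bound $|c(\lambda,\mu)|^{-1}$ by $c_\g^{-1}$ via \eqref{eq:cmulambdabound}, apply Lemma \ref{lemma1} (at $m=0$) to the $L$-representation $(\Sigma,V\otimes S)$ to control $|Q(\mu)v(\lambda)|_p$ by $(\dim W_\mu)^2|v(\lambda)|_q$, and sum over the finite set $I(\lambda)$ using \eqref{eq:dimensionestimate}. The explicit remark that only the $m=0$ case of Lemma \ref{lemma1} is needed here, and that the choice of invariant form on $\l$ is therefore immaterial, is a helpful clarification of what the paper leaves implicit, but it does not constitute a different argument.
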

\begin{proof}
	Applying the bound \eqref{eq:cmulambdabound} from Lemma \ref{lem:bounds} to \eqref{eq:defofTlambda}, we obtain
	\begin{equation}\label{ineq1}
		\abs{T_\lambda v(\lambda)}_p\leq c_\g^{-1}\sum_{\mu \in I(\lambda)} \abs{Q(\mu)v(\lambda)}_p
	\end{equation}
	for all $v(\lambda)\in V(\lambda)\otimes S$. 
	
	Invoking Lemma \ref{lemma1} for the representation $(\Sigma, V\otimes S)$, there is a continuous seminorm $\abs{\,\cdot\,}_q$ on $V\otimes S$ such that
	\[
	\abs{Q(\mu)v}_p\leq (\dim W_\mu)^2\abs{v}_q
	\]
	for all $v\in V\otimes S$.
	
	Summing over all $\mu\in I(\lambda)$ and applying the bound \eqref{eq:dimensionestimate} from Lemma \ref{lem:bounds}, we obtain
	\be\label{ineq2}
	\sum_{\mu \in I(\lambda)} \abs{Q(\mu)v(\lambda)}_p \leq (\dim V_\lambda)^2(\dim S)^2\abs{v(\lambda)}_q
	\ee
	for all $v(\lambda)\in V(\lambda)\otimes S$.
	
	Combining \eqref{ineq1} and \eqref{ineq2}, we obtain
	\begin{equation}
		\abs{T_\lambda v(\lambda)}_p\leq c_\g^{-1}(\dim V_\lambda)^2(\dim S)^2\abs{v(\lambda)}_q.
	\end{equation}
	Therefore, $T_\lambda$ is continuous.
\end{proof}

We augment the family of operators $T_\lambda$ for $\lambda\in\wh{G}$ to an operator on $V\otimes S$.

\begin{lem}\label{lem:existenceofT}
	There is a unique $L$-equivariant continuous linear operator $T$ on $V\otimes S$ whose restriction to $V(\lambda)\otimes S$ is $T_\lambda$ for every $\lambda\in \wh{G}$. Moreover, $T$  commutes with $D^2$, and $D^2T$ is an $L$-equivariant continuous linear projection.
\end{lem}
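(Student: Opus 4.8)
The plan is to define $T$ by the Fourier-type series
\[
Tv \;:=\; \sum_{\lambda\in\wh{G}} T_\lambda\bigl(P(\lambda)v\bigr),\qquad v\in V\otimes S,
\]
and to read off every assertion from the isotypic decomposition \eqref{idv00} of the smooth $G$-representation $(\Pi, V\otimes S)$ together with continuity of the operators involved. The first and main task is to prove that this series converges and that $T$ is continuous. Fixing a continuous seminorm $\abs{\,\cdot\,}_p$ on $V\otimes S$, Lemma \ref{est} gives a continuous seminorm $\abs{\,\cdot\,}_q$ with $\abs{T_\lambda P(\lambda)v}_p\le c_\g^{-1}(\dim S)^2(\dim V_\lambda)^2\,\abs{P(\lambda)v}_q$; applying Lemma \ref{lemma1} to $(\Pi, V\otimes S)$ yields a continuous seminorm $\abs{\,\cdot\,}_r$ with $\abs{P(\lambda)v}_q\le (1+c(\lambda))^{-m}(\dim V_\lambda)^2\,\abs{(1+\Omega_\g)^m v}_r$ for every $m\in\BN$; and Lemma \ref{lemma2} lets me choose $m$ so that $\sum_{\lambda\in\wh{G}}(\dim V_\lambda)^4(1+c(\lambda))^{-m}<\infty$. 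Chaining the three estimates shows $\sum_\lambda\abs{T_\lambda P(\lambda)v}_p<\infty$, with sum bounded by a constant depending only on $p$ times the continuous seminorm $\abs{(1+\Omega_\g)^m v}_r$. Since $V\otimes S$ is quasi-complete, the partial sums over finite subsets of $\wh{G}$ form a bounded Cauchy net and hence converge; this defines $Tv$, and the displayed bound shows $T$ is continuous. Uniqueness is then immediate, since $v=\sum_\lambda P(\lambda)v$ converges and any continuous operator restricting to $T_\lambda$ on $V(\lambda)\otimes S$ must agree with $T$.

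Next I would verify the algebraic properties. For $L$-equivariance: $\chi_\lambda$ is a class function on $G$, so $\ov\chi_\lambda$ is central in the convolution algebra and $P(\lambda)=\pi(\ov\chi_\lambda)\otimes 1$ commutes with $\Sigma(\ell)=\pi(\ell)\otimes\Ad(\ell)$ for $\ell\in L$; since every $T_\lambda$ is $L$-equivariant (as noted after \eqref{eq:defofTlambda}), $T$ is $L$-equivariant by continuity. For the commutation with $D^2$: the element $D^2\in\RU(\g)\otimes\RC(\s)$ acts continuously on $V\otimes S$ and stabilizes each isotypic block $\Im P(\lambda)=V(\lambda)\otimes S$, hence commutes with $P(\lambda)$; using $T_\lambda D_\lambda^2=D_\lambda^2 T_\lambda$ from \eqref{eq:D2Tlambda} and continuity of $D^2$, I obtain $TD^2 v=\sum_\lambda T_\lambda D_\lambda^2 P(\lambda)v=\sum_\lambda D_\lambda^2 T_\lambda P(\lambda)v=D^2 Tv$. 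Finally $D^2T$ is continuous and $L$-equivariant as a composition, and its restriction to $V(\lambda)\otimes S$ is $D_\lambda^2 T_\lambda=(D_\lambda^2 T_\lambda)^2$ by \eqref{eq:D2Tlambda}; therefore $(D^2T)^2=D^2T$ by continuity, so $D^2T$ is a projection.

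I expect the only genuine obstacle to be the convergence-and-continuity estimate in the first step: one must track the two dimension factors — one from $T_\lambda$ via Lemma \ref{est}, one from the isotypic projector $P(\lambda)$ via Lemma \ref{lemma1} — which combine to $(\dim V_\lambda)^4$, and arrange, via Lemma \ref{lemma2}, for the Casimir decay $(1+c(\lambda))^{-m}$ to dominate this uniformly in $v$. Once that is in place, the remaining claims are formal consequences of the isotypic decomposition and of the continuity of $D^2$ and of $T$.
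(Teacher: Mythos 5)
Your proposal is correct and follows essentially the same route as the paper: define $T$ by the Fourier series $\sum_\lambda T_\lambda P(\lambda)v$, chain the estimates of Lemma~\ref{est}, Lemma~\ref{lemma1}, and Lemma~\ref{lemma2} to get absolute convergence and a uniform seminorm bound, and then deduce the algebraic properties ($L$-equivariance, $D^2T=TD^2=(TD^2)^2$) from the corresponding blockwise identities by continuity and density of $\bigoplus_\lambda V(\lambda)\otimes S$. The only difference is presentational: you spell out the $L$-equivariance, the use of quasi-completeness for convergence, and the projection identity $(D^2T)^2=D^2T$ in more explicit detail than the paper, which dispatches these as immediate consequences of the construction.
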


\begin{proof}
	We aim at defining the $L$-equivariant operator $T$ by the formula
	\be\label{eq:defofT}
	Tv=\sum_{\lambda\in \wh{G}}T_\lambda P(\lambda)v,\quad \text{for~} v\in V\otimes S.
	\ee
	For if $T$ exists, it must be of this form. We need to show that $T$ is well defined. That is the right hand side is absolutely convergent for every $v\in V\otimes S$ and that the resulting operator $T$ is continuous.
	
	Let $\abs{\,\cdot\,}_p$ be a continuous seminorm on $V\otimes S$, by Lemma \ref{est}, there is a continuous seminorm $\abs{\,\cdot\,}_q$ such that for all $v\in V\otimes S$,
	\begin{eqnarray*}
		\abs{T_\lambda P(\lambda)v}_p\leq c_\g^{-1} (\dim S)^2(\dim V_\lambda)^2 \abs{P(\lambda)v}_q
	\end{eqnarray*}
	where $c_\g$ is a positive constant depending only on $\g$.
	
	Using Lemma \ref{lemma1}, there is a continuous seminorm $\abs{\,\cdot\,}_r$ such that for all $m\in\BN$ and all $v\in V\otimes S$,
	\[
	\abs{P(\lambda)v}_q\leq (1+c(\lambda))^{-m}(\dim V_\lambda)^2\abs{(1+\Omega_\g)^mv}_r.
	\]
	Hence, for all $m\in \BN$ and all $ v\in V\otimes S$
	\begin{eqnarray*}
		\sum_{\lambda\in \wh{G}}\abs{T_\lambda P(\lambda)v}_p&\leq&
		c_\g^{-1} \sum_{\lambda\in \wh{G}}(\dim S)^2(\dim V_\lambda)^2 \abs{P(\lambda)v}_q\\
		&\leq& \left(\sum_{\lambda\in \wh{G}}  (\dim V_\lambda)^4(1+c(\lambda))^{-m}\right) c_\g^{-1}\cdot (\dim S)^2\cdot \abs{(1+\Omega_\g)^mv}_r.
	\end{eqnarray*}
	By Lemma \ref{lemma2}, there is an $m_0\in \BN$ such that 
	\[
	C_0:=c_\g^{-1}\cdot (\dim S)^2\sum_{\lambda\in \wh{G}}  (\dim V_\lambda)^4(1+c(\lambda))^{-m}<\infty.
	\]
	We conclude that 
	\begin{equation}\label{convergence}
		\sum_{\lambda\in \wh{G}}\abs{T_\lambda P(\lambda)v}_p\leq C_0 \abs{(1+\Omega_\g)^{m_0}v}_r.
	\end{equation}
	Since $\abs{\,\cdot\,}_p$ is an arbitrary continuous seminorm on $V\otimes S$, the Fourier series \[
	\sum_{\lambda\in \wh{G}}T_\lambda 	P(\lambda)v
	\]
	is absolutely convergent and \eqref{eq:defofT} defines a linear operator $T$ on $V\otimes S$. Moreover, for every continuous seminorm $\abs{\,\cdot\,}_p$, we have 
	\begin{equation}
		\abs{Tv}_p\leq\sum_{\lambda\in \wh{G}}\abs{T_\lambda P(\lambda)v}_p\leq C_0 \abs{(1+\Omega_\g)^{m_0}v}_r=C_0 \abs{v}_t,
	\end{equation}
	where $\abs{\,\cdot\,}_t$ is the continuous seminorm given by $\abs{v}_t=\abs{(1+\Omega_\g)^{m_0}v}_r$. This establishes the continuity of $T$.
	
	By construction of $T$ (cf.\ \eqref{eq:defofT}), the relations $D_\lambda^2T_\lambda=T_\lambda D_\lambda^2=(T_\lambda D_\lambda^2)^2$ (cf.\,\eqref{eq:D2Tlambda}) imply the relations $D^2T=TD^2=(T D^2)^2$.
\end{proof}

With the $L$-equivariant continuous projection $D^2T$ at hand, we now proceed to prove our main theorem. 
\begin{thml}
	Let $G$ be a compact Lie group and $V$ be a smooth representation of $G$. Then subspaces $\Im\, D,\Im\,\partial,\Im\dif\!$ of $V\otimes S$ are closed. Moreover,  the following decompositions of smooth representations of $L$ hold:
	\begin{eqnarray}
		V\otimes S&=&\Ker \,D\oplus\Im\, D,\label{de7}\\
		\Ker\,\partial&=&\Ker\, D\oplus\Im\,\partial,\label{de8}\\
		\Ker \dif&=&\Ker\, D\oplus\Im\dif\!.\label{de9}
	\end{eqnarray}
	Therefore, $\RH_\bullet(\u,V)$ and $\RH^\bullet(\ov\u,V)$ are Hausdorff and quasi-complete. Moreover, 
	\be\label{eq:dualhomologyiso}
	\RH_\bullet(\u,V)\cong \RH^\bullet(\ov\u,V)\cong \Ker\, D
	\ee
	 as smooth representations of $L$.	
\end{thml}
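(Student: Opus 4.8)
The plan is to promote the continuous $L$-equivariant operator $T$ of Lemma~\ref{lem:existenceofT} to a family of mutually orthogonal continuous projections that simultaneously realise \eqref{de7}--\eqref{de9}, and then to pin down their images and kernels by testing on the isotypic pieces $\Im P(\lambda)=\wedge^\bullet\u\otimes V(\lambda)$, where Theorem~\ref{thm:subspacedecompositions} reduces everything to finite-dimensional linear algebra. As preliminary bookkeeping I would note that, because $V(\lambda)$ is a $\g$-submodule of $(\pi,V)$, each of $\dif$, $\partial$, $D$, $D^2$ carries $\Im P(\lambda)$ into itself; being continuous they therefore commute with every $P(\lambda)$, while $L$-equivariance makes them commute with every $Q(\mu)$, and hence with $T$.

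Using Theorem~\ref{thm:huang} to write $D^2=2\partial\dif+2\dif\partial$, I would introduce
\[
\Pi_\partial:=2\partial\dif\,T,\qquad\Pi_\dif:=2\dif\partial\,T,\qquad\Pi_D:=D^2T=\Pi_\partial+\Pi_\dif,
\]
which are continuous and $L$-equivariant by Lemma~\ref{lem:existenceofT}. On a summand $W(\lambda,\mu)$ from Theorem~\ref{thm:subspacedecompositions}(a) the operator $T$ is scalar multiplication by $c(\lambda,\mu)^{-1}$ when $\mu\in I(\lambda)$ and is $0$ when $\mu\in J(\lambda)$, and $D^2=c(\lambda,\mu)$ there; combined with $\dif^2=\partial^2=0$ and the fibrewise identities of Theorem~\ref{thm:subspacedecompositions}(c), this gives at once $\Pi_\partial^2=\Pi_\partial$, $\Pi_\dif^2=\Pi_\dif$ and $\Pi_\partial\Pi_\dif=\Pi_\dif\Pi_\partial=0$ on that summand, hence on the dense span $\bigoplus_\lambda\Im P(\lambda)$, hence on all of $V\otimes S$ by continuity. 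Thus $\Pi_\partial$, $\Pi_\dif$ and $1-\Pi_D$ are pairwise orthogonal continuous $L$-equivariant projections summing to the identity, and $V\otimes S$ is the topological direct sum of the three closed $L$-submodules $\Im\Pi_\partial$, $\Im\Pi_\dif$, $\Ker\Pi_D$.

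Next I would identify these summands. The inclusions $\Im\Pi_\partial\subseteq\Im\partial$, $\Im\Pi_\dif\subseteq\Im\dif$ and $\Ker D\subseteq\Ker\Pi_D$ are immediate; for the reverse ones I would use that, on each $\Im P(\lambda)$, the identities \eqref{de1}--\eqref{de6} show that $\Pi_\partial$ restricts to the identity on $\Im\partial_\lambda$, that $\Pi_\dif$ restricts to the identity on $\Im\dif_\lambda$, and that $\Ker\Pi_D\cap\Im P(\lambda)=\Ker D_\lambda$. Thus for $v=\partial w\in\Im\partial$, writing $v=\sum_\lambda\partial P(\lambda)w$ (an absolutely convergent expansion via \eqref{idv00}) and applying the continuous $\Pi_\partial$ termwise yields $\Pi_\partial v=v$, and likewise for $\dif$, and the analogous argument with $1-\Pi_D$ on $\Ker D$ gives $\Ker\Pi_D=\Ker D$. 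Hence $\Im D$, $\Im\partial$, $\Im\dif$ are closed, $\Im D=\Im\partial\oplus\Im\dif$, and we obtain the topological splitting $V\otimes S=\Im\dif\oplus\Im\partial\oplus\Ker D$ of smooth $L$-representations, which is \eqref{de7}. For \eqref{de8} I would observe $\Im\partial\oplus\Ker D\subseteq\Ker\partial$ (from $\partial^2=0$ and $\Ker D_\lambda\subseteq\Ker\partial_\lambda$ on each piece) while $\Im\dif\cap\Ker\partial=0$ (checked fibrewise from \eqref{de1}, \eqref{de2}, \eqref{de4}); decomposing $v\in\Ker\partial$ as $x+y$ with $x\in\Im\dif$ and $y\in\Im\partial\oplus\Ker D\subseteq\Ker\partial$ then forces $x\in\Im\dif\cap\Ker\partial=0$, so $\Ker\partial=\Im\partial\oplus\Ker D$, and symmetrically $\Ker\dif=\Im\dif\oplus\Ker D$, which is \eqref{de9}.

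Finally, under the identification \eqref{eq:dualcomplexisomorphism} the complexes computing $\RH_\bullet(\u,V)$ and $\RH^\bullet(\ov\u,V)$ are realised on $V\otimes S$ with differentials $\partial$ and $\dif$; equations \eqref{de8} and \eqref{de9} say precisely that $\Im\partial$ is closed in $\Ker\partial$ and $\Im\dif$ in $\Ker\dif$, so both spaces are Hausdorff, and the continuous $L$-equivariant projection $1-\Pi_D$ induces topological $L$-isomorphisms $\Ker\partial/\Im\partial\cong\Ker D\cong\Ker\dif/\Im\dif$. Since $\Ker D$ is a closed $\Sigma(L)$-invariant subspace of the quasi-complete smooth $L$-representation $(\Sigma,V\otimes S)$, it is itself quasi-complete and a smooth representation of $L$, so \eqref{eq:dualhomologyiso} and the remaining assertions follow. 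I expect the only genuine difficulty to be organisational: the per-$\lambda$ decompositions of Theorem~\ref{thm:subspacedecompositions} are purely algebraic, and the crux is to glue them into \emph{topological} direct sums with closed summands. The continuity of $T$ (Lemma~\ref{est}, Lemma~\ref{lem:existenceofT}) is exactly what makes $\Pi_\partial$, $\Pi_\dif$, $\Pi_D$ well defined beyond the algebraic sum $\bigoplus_\lambda\Im P(\lambda)$, so the work consists in using that continuity together with absolute convergence of Fourier series to transfer the fibrewise statements to all of $V\otimes S$ while keeping track of $L$-equivariance.
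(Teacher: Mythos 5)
Your argument is correct and rests on the same pillars as the paper's: the continuous $L$-equivariant operator $T$ of Lemma~\ref{lem:existenceofT}, the per-$\lambda$ finite-dimensional decompositions of Theorem~\ref{thm:subspacedecompositions}, the identity $D^2=2\partial\dif+2\dif\partial$, and absolutely convergent Fourier series. The route you take is, however, organized differently and is worth contrasting. The paper works throughout with the single continuous projection $D^2T$: it first deduces \eqref{de7} from $V\otimes S=\Ker D^2T\oplus\Im D^2T$, identifying $\Im D^2T=\Im D$ and $\Ker D^2T=\Ker D$ by a chain of inclusions against the closure $\ov{\Im\,D}$, and then obtains \eqref{de8}--\eqref{de9} by restricting $D^2T$ to the closed invariant subspaces $\Ker\,\partial$ and $\Ker\dif$, using $D^2|_{\Ker\,\partial}=2\partial\!\dif$ and $D^2|_{\Ker\dif}=2\!\dif\!\partial$ together with Lemma~\ref{prop} to pin down the image. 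You instead introduce from the outset the three operators $\Pi_\partial=2\partial\!\dif\, T$, $\Pi_\dif=2\!\dif\!\partial\, T$ and $1-\Pi_D$, verify on the dense span $\bigoplus_\lambda V(\lambda)\otimes S$ (using the scalar action of $D^2$ on $W(\lambda,\mu)$ and $\partial^2=\dif^2=0$) that they are mutually orthogonal continuous $L$-equivariant projections summing to $1$, and so get the finer topological splitting $V\otimes S=\Im\Pi_\partial\oplus\Im\Pi_\dif\oplus\Ker\Pi_D$ with closed summands in one stroke; the identifications $\Im\Pi_\partial=\Im\,\partial$, $\Im\Pi_\dif=\Im\dif$, $\Ker\Pi_D=\Ker\,D$ then follow by termwise application along the Fourier expansion. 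Your version has the advantage that closedness of $\Im\,\partial$, $\Im\dif$ and $\Im\,D$ is immediate (they are images of continuous projections), whereas the paper closes the loop with inclusion chains; the two proofs are otherwise equivalent, since $\Pi_\partial$ and $\Pi_\dif$ agree with the restrictions $D^2T|_{\Ker\,\partial}$ and $D^2T|_{\Ker\dif}$ that the paper uses. One small point to spell out when writing this up: the step $\Ker\Pi_D\subseteq\Ker\,D$ is not quite ``analogous'' to the $\Im\,\partial$ case, since one must argue $P(\lambda)v\in\Ker\Pi_D\cap\Im P(\lambda)=\Ker D_\lambda$ for each $\lambda$ and then sum $Dv=\sum_\lambda DP(\lambda)v=0$, rather than applying a projection termwise; the conclusion is nonetheless correct.
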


\begin{proof}
	By Lemma \ref{lem:existenceofT}, $D^2T$ is an $L$-equivariant continuous projection, therefore the image and kernel of $D^2T$ are closed and we have a decomposition of smooth representations of $L$:
	\bee
	V\otimes S= \Ker\, D^2T\oplus \Im\, D^2T.
	\eee
	
	On the one hand, Lemma \ref{prop} together with $\Im\, D_\lambda^2T_\lambda=\Im\,D_\lambda^2=\Im\, D_\lambda$ (cf.\ \eqref{de5} and \eqref{eq:D2Tlambda}) shows that
	\[
	\Im\, D^2T=\ov{\bigoplus_{\lambda\in \wh{G}}\Im\, D_\lambda^2T_\lambda}=\ov{\bigoplus_{\lambda\in \wh{G}}\Im\, D_\lambda}=\ov{\Im\, D}.
	\]
	In this proof, $``\,\ov{\phantom a}\,"$  over a subspace of $V\otimes S$ denotes its closure in $V\otimes S$.
	On the other hand, we have the chain of inclusions
	\[
	\Im\, D^2T\subset \Im\, D^2\subset \Im\, D\subset \ov{\Im\, D}=\Im\, D^2T.
	\]
	We conclude that $\Im\, D=\Im\, D^2T$ is closed in $V\otimes S$.
	
	Combining Theorem \ref{thm:subspacedecompositions} with \eqref{eq:D2Tlambda}, 
	 it follows that
	\[
	\Ker\, D_\lambda=\Ker\, D_\lambda^2 =\Ker\, D^2_\lambda T_\lambda=\Ker \dif_\lambda\cap\,\Ker\,\partial_\lambda.
	\]
	Applying Lemma \ref{prop} once more, we conclude that
	\be\label{eq:kerD}
	\Ker\, D=\Ker \,D^2T=\Ker \dif\cap\, \Ker\,\partial.
	\ee
	Therefore, we obtain the decomposition \eqref{de7} of  smooth representations of $L$ as claimed.
	
	Note that $\partial_\lambda,\dif_\lambda$ commute with $T_\lambda,D^2_\lambda$ for all $\lambda\in\wh G$. It follows that $\partial,\dif$ commute with $T,D^2$. Then $\Ker\,\partial$ and $\Ker \dif$ are closed subspaces which are invariant under $D^2$ and $T$. In addition, $\Ker\,\partial$ and $\Ker \dif$ are $L$-invariant subspaces. Therefore $D^2T$ restricted to $\Ker\,\partial$ and $\Ker\dif\,$ is again an $L$-equivariant continuous projection operator on $\Ker\,\partial$ and $\Ker \dif$. Combining this fact with \eqref{eq:kerD} shows that
	\begin{eqnarray*}
		\Ker\,\partial&=& \Ker\, (D^2T|_{\Ker\,\partial})\oplus \Im\, (D^2T|_{\Ker\,\partial})= \Ker\, D\oplus D^2T(\Ker\,\partial),\\
		\Ker \dif&=& \Ker\, (D^2T|_{\Ker \dif})\oplus \Im\, (D^2T|_{\Ker \dif})=\Ker\, D\oplus D^2T(\Ker \dif),
	\end{eqnarray*}
	as smooth representations of $L$.
	
	By Theorem \ref{thm:subspacedecompositions},  identities \eqref{de3} \eqref{de4} therein and Lemma \ref{prop}, we have 
	\begin{eqnarray*}
		\Im\, (D^2T|_{\Ker\,\partial})
		&=&D^2T(\Ker\,\partial)
		=	\ov{\bigoplus_{\lambda\in\wh{G}} D^2_\lambda T_\lambda(\Ker\,\partial_\lambda) }
		=\ov{\bigoplus_{\lambda\in\wh{G}}\Im\,\partial_\lambda\,}
		=\ov{\Im\,\partial},\\
		\Im\, (D^2T|_{\Ker\dif})
		&=&D^2T(\Ker\dif)
		=	\ov{\bigoplus_{\lambda\in\wh{G}} D^2_\lambda T_\lambda(\Ker\dif_\lambda) }
		=\ov{\bigoplus_{\lambda\in\wh{G}}\Im\dif_\lambda}
		=\ov{\Im \dif}.
	\end{eqnarray*}
	By Theorem \ref{thm:huang}, we have $D^2=2\partial\!\dif+2\!\dif\!\partial$, which in turn implies
	\[
	D^2|_{\Ker\,\partial}=2\partial\!\dif\!|_{\Ker\,\partial}\quad\text{and}\quad D^2|_{\Ker \dif}=2\!\dif\!\partial|_{\Ker \dif}.
	\]
	Consequently, there are chains of inclusions
\begin{eqnarray*}
	&&\Im\,\partial\subset \overline{\Im\,\partial} \,=D^2T(\Ker\,\partial)=2\partial\!\dif T(\Ker\,\partial)\subset \Im\,\partial,\\
	&&\Im\dif\,\subset \overline{\Im\dif}=D^2T(\Ker \dif)\,=2\!\dif\!\partial T(\Ker\dif)\subset \Im \dif\!.	
\end{eqnarray*}
	This proves decompositions \eqref{de8} and \eqref{de9} of smooth representations of $L$.
\end{proof}

\end{document}